\DeclareMathAlphabet{\varmathbb}{U}{pxsyb}{m}{n}
\def\leq{\leqslant}
\def\geq{\geqslant}
\def\phi{\varphi}
\def\bar{\overline}
\def\kappa{\varkappa}
\newcommand{\D}{\mathrm{d}\kern0.2pt}%
\newcommand{\E}{\mathrm{e}\kern0.2pt} 
\newcommand{\ii}{\kern0.05em\mathrm{i}\kern0.05em}
\newcommand{\RR}{\mathbb{R}}%
\newtheorem{theorem}{\bf \indent Theorem}[section]
\newtheorem{proposition}{\bf \indent Proposition}[section]
\newtheorem{lemma}{\bf \indent Lemma}[section]
\newtheorem{corollary}{\bf \indent Corollary}[section]
\theoremstyle{remark}
\newtheorem{conjecture}{\bf \indent Conjecture}[section]
\numberwithin{equation}{section}
\begin{document}

\noindent {\Large \bf INVERSE MEAN VALUE PROPERTIES (A SURVEY)}

\vskip5mm

{\bf Nikolay Kuznetsov}

\vskip-2pt {\small Laboratory for Mathematical Modelling of Wave Phenomena}
\vskip-4pt {\small Institute for Problems in Mechanical Engineering, Russian Academy
of Sciences} \vskip-4pt {\small V.O., Bol'shoy pr. 61, St Petersburg, 199178,
Russian Federation} \vskip-4pt {\small nikolay.g.kuznetsov@gmail.com}

\vskip4mm

\parbox{134mm} {\noindent Several mean value identities for harmonic and panharmonic
functions are reviewed along with the corresponding inverse properties. The latter
characterize balls, annuli and strips analytically via these functions.}

{\centering \section{Introduction and notation} }

\noindent Inverse mean value properties of harmonic functions ($C^2$-solutions of
the Laplace equation; see \cite{ABR}, p.~25, about the origin of the term
`harmonic') are well known for balls and spheres; see the survey article \cite{NV},
Sections~7 and 8, respectively. In particular, Kuran \cite{K} obtained a simple
proof of the following general assertion.


\begin{theorem}
Let $D$ be a domain (= connected open set) of finite (Lebesgue) measure in the
Euclidean space $\RR^m$ where $m \geq 2$. Suppose that there exists a point $P_0$
in $D$ such that, for every function $h$ harmonic in $D$ and integrable over $D$,
the volume mean of $h$ over $D$ equals $h (P_0)$. Then $D$ is an open ball (disk
when $m=2$) centred at $P_0$.
\end{theorem}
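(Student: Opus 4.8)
The plan is to compare $D$ with the ball of the same volume centred at $P_0$, to show that the two bodies have identical Newtonian potentials throughout their common exterior, and then to recover $D$ itself from this exterior information. Normalise so that $P_0$ is the origin, and let $B$ be the open ball of radius $R$ centred at the origin with $|B|=|D|$; write $K(x)=|x|^{2-m}$ for the Newtonian kernel (for $m=2$ one takes $K(x)=-\log|x|$, and the argument runs verbatim after the obvious changes at infinity). For every $a\notin\overline D$ the function $x\mapsto K(x-a)$ is harmonic and bounded on $D$, hence integrable, so the hypothesis yields $\int_D K(x-a)\,\D x=|D|\,K(a)$. Thus the potential $U_D(a):=\int_D K(x-a)\,\D x$ equals $|D|\,K(a)$ throughout $\RR^m\setminus\overline D$.

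The second ingredient is Newton's theorem: a ball of uniform density attracts every exterior point as if its mass were concentrated at the centre, so $U_B(a):=\int_B K(x-a)\,\D x=|B|\,K(a)=|D|\,K(a)$ for $a\notin\overline B$. Comparing the two formulas, the potentials coincide on the common exterior, $U_D\equiv U_B$ on $\RR^m\setminus(\overline D\cup\overline B)$. Set $V:=U_D-U_B$. Newtonian potentials of bounded densities are $C^1$, so $V\in C^1(\RR^m)$ and, since $V$ vanishes on the open set $\RR^m\setminus(\overline D\cup\overline B)$, both $V$ and $\nabla V$ vanish off the compact set $\overline D\cup\overline B$; moreover $\Delta V=c_m(\mathbf 1_B-\mathbf 1_D)$ distributionally, with a dimensional constant $c_m>0$. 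Everything now reduces to the single assertion $V\equiv0$: indeed $\Delta V=0$ forces $\mathbf 1_D=\mathbf 1_B$ almost everywhere, and a short topological argument (using that $D,B$ are open and $D$ is connected) upgrades this to $D=B$.

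To prove $V\equiv0$ I would exploit the one feature distinguishing the ball, namely that the sign of $\mathbf 1_D-\mathbf 1_B$ is controlled by the radius: it is $\geq 0$ on $\{|x|>R\}$, where $D\setminus B$ lives, and $\leq 0$ on $\{|x|<R\}$, where $B\setminus D$ lives. Hence $V$ is superharmonic outside $\overline B$ and subharmonic inside $B$. The rigidity intuition comes from the strict rearrangement inequality: since $U_B$ is radial and strictly decreasing in $|x|$ and $|D\setminus B|=|B\setminus D|$, one has $\int(\mathbf 1_D-\mathbf 1_B)\,U_B=\int_{D\setminus B}U_B-\int_{B\setminus D}U_B\leq 0$, strictly whenever $|D\triangle B|>0$; equivalently, the centred ball is the \emph{unique} maximiser of $A\mapsto\int_A\int_A K$ among sets of prescribed volume, so any competitor that matches it must coincide with it. The genuinely hard step, and the one I expect to be the main obstacle, is to convert the exterior coincidence together with its free Cauchy data $V=|\nabla V|=0$ on $\partial(\overline D\cup\overline B)$ into the statement that $D$ realises this extremal configuration. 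I would attempt this through the maximum principle adapted to the super/subharmonic splitting above, crossing $\partial D$ by unique continuation from the vanishing Cauchy data, which turns the question into an overdetermined (Serrin-type) boundary condition forcing $\partial D$ to be the sphere $\partial B$. This interface analysis along $\partial B\cap\partial D$ is where the real work lies; the reductions preceding it are routine.
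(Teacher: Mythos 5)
There is a genuine gap here --- in fact two, and they are fatal in the stated generality. First, your entire strategy rests on testing with $K(x-a)$ for poles $a\notin\overline D$, but the theorem assumes only that $D$ is a \emph{domain of finite measure}: it need not be bounded, and its closure can be all of $\RR^m$ (take a connected chain of small balls centred at the points of a countable dense set, with summable volumes). For such a $D$ the set of admissible exterior poles is empty, the identity $U_D(a)=|D|\,K(a)$ holds vacuously, and the approach never gets started; even when the exterior is nonempty, $\overline D\cup\overline B$ need not be compact, so your claim that $V$ and $\nabla V$ ``vanish off a compact set'' fails, and for $m=2$ the kernel $-\log|x-a|$ need not even be integrable over an unbounded finite-measure $D$, so the hypothesis does not apply to it. Second --- and decisive even for bounded $D$ --- the step you yourself flag as ``where the real work lies'' \emph{is} the theorem. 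No regularity of $\partial D$ is assumed, so unique continuation across $\partial D$ from vanishing Cauchy data and a Serrin-type overdetermined argument have no footing ($\partial D$ may be nowhere smooth; Serrin--Weinberger, cf.\ Theorem 3.3, requires $C^2$ boundaries). The exterior-potential rigidity you invoke is precisely the Aharonov--Schiffer--Zalcman ``potato kugel'' theorem (Theorem 3.1 of this survey), which carries extra hypotheses ($D$ bounded, $D=\mathrm{int}\,D$, $|\partial D|=0$) exactly because exterior data cannot detect measure-zero defects --- and in the literature that result is obtained via Kuran's theorem, not conversely, so completing your plan along published lines would be circular. Note also that your endgame is false as stated: $\chi_D=\chi_B$ a.e.\ does \emph{not} imply $D=B$ for an open connected $D$ (take $B$ minus an interior point); excluding such sets requires reusing the mean value hypothesis with a singular harmonic test function, not topology alone.

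For contrast, Kuran's proof --- the one this paper points to --- sidesteps all of this by working at the boundary rather than in the exterior. Let $r$ be the distance from $P_0$ to $\partial D$ (finite since $|D|<\infty$), let $B=B_r(P_0)\subset D$, and, assuming $D\neq B$, pick $y\in\partial B\cap\partial D$ and test with the Poisson-kernel-type function $h(x)=r^{m-2}\bigl(r^2-|x-P_0|^2\bigr)/|x-y|^{m}$. This $h$ is harmonic in $\RR^m\setminus\{y\}\supset D$, integrable over $D$ (it grows only like $|x-y|^{1-m}$ near $y$ and is bounded elsewhere), satisfies $h(P_0)=1$, is positive in $B$, and is strictly negative on the nonempty open set $D\setminus\overline B$; exhausting $B$ by concentric balls gives $\int_B h=|B|$, whence $\int_D h<|B|\leq|D|=|D|\,h(P_0)$, contradicting the hypothesis. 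The sign structure of the Poisson kernel with pole at a \emph{common boundary point} replaces your whole interface analysis, and the argument needs no exterior points, no boundary regularity and no boundedness --- which is why it proves the theorem as stated, while the potential-comparison route cannot without substantial additional hypotheses.
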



\noindent His original proof is available online at
https://matthewhr.wordpress.com/2014/11/15/ kurans-theorem/

At the same time, many interesting results in this area were not reviewed so far.
They concern other characterizations of balls via harmonic functions as well as
characterization of various other domains: strips, annuli etc. Moreover, it was,
until recently, unknown whether an assertion similar to Theorem~1.1 is true if
solutions of another partial differential equation are used instead of harmonic
functions; of course, the equality for the arithmetic mean over balls must be
adjusted to these solutions. Only in 2021, a characterization of $m$-dimensional
balls by solutions to the modified Helmholtz equation
\begin{equation}
\nabla^2 u - \mu^2 u = 0 , \quad \mu \in \RR \setminus \{0\} ,
\label{Hh}
\end{equation}
was obtained; see \cite{Ku, Ku1}. (Here and below, $\nabla = (\partial_1, \dots ,
\partial_m)$, $\partial_i = \partial / \partial x_i$, denotes the gradient
operator.) In what follows, instead of the cumbersome `solution of the modified
Helmholtz equation' the term `panharmonic function' is used; this convenient
abbreviation was introduced in \cite{D}.

The are numerous and diverse results about inverse mean value properties that are
not covered in \cite{NV}, and our aim is to review them; the plan is as follows.
Mean value equalities used in the paper are described in Section~2 with proper
references. In Section~3, these equalities are applied for characterizing balls via
harmonic and panharmonic functions, whereas characterizations of annuli and strips
by quadrature formulae involving mean values are considered in Section~4. It should
be said that caloric functions (solutions of the heat equation) are beyond our scope
because their mean value properties have rather specific character different from
that considered in Section~2; see, for example, \cite{F,NAW}.

Let us introduce some notation used below. For a point $x = (x_1, \dots, x_m) \in
\RR^m$, $m \geq 2$, we denote by $B_r (x) = \{ y \in \RR^m : |y-x| < r \}$ the open
ball of radius $r$ centred at $x$ (just $B_r$, if centred at the origin). The ball
is called admissible with respect to a domain $D \subset \RR^m$ provided
$\overline{B_r (x)} \subset D$, and $\partial B_r (x)$ is called admissible sphere
in this case. If $D$ has finite Lebesgue measure and a function $f$ is integrable
over $D$, then\\[-3mm]
\[ M^\bullet (f, D) = \frac{1}{|D|} \int_{D} f (x) \, \D x
\]
is its volume mean value over $D$. Here and below $|D|$ is the domain's volume (area
if $D \subset \RR^2$), and the volume of $B_r$ is $|B_r| = \omega_m r^m$, where
$\omega_m = 2 \, \pi^{m/2} / [m \Gamma (m/2)]$ is the volume of the unit ball; as
usual $\Gamma$ denotes the Gamma function. For $u \in C^0 (\overline D)$, the mean
value\\[-5mm]
\[ M^\circ (f, \partial D) = \frac{1}{|\partial D|} \int_{\partial D}
f (y) \, \D S_y 
\]
over a sufficiently smooth $\partial D$ is another useful notion; here $|\partial
D|$ is the surface area of domain's boundary ($|\partial B_r| = m \, \omega_m
r^{m-1}$), and $\D S$ is the surface area measure. The exterior unit normal on
$\partial D$ is denoted by $n$.

\vspace{-4mm}

{\centering \section{Mean value equalities} }

\vspace{-2mm}

\subsection{Spheres and balls}

\vspace{-1mm}

\noindent Studies of mean value properties of harmonic functions date back to the
Gauss theorem of the arithmetic mean over a sphere; see \cite{G}, Article~20.
Nowadays, its standard formulation is as follows.

\vspace{-2mm}

\begin{theorem}
Let $D$ be a domain in $\RR^m$, $m \geq 2$. Then $u \in C^2 (D)$ is harmonic in~$D$
if and only if for every $x \in D$\\[-5mm]
\begin{equation}
M^\circ (u, \partial B_r (x)) = u (x) \label{Ms}
\end{equation}
for each admissible sphere $\partial B_r (x)$.
\end{theorem}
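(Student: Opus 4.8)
The plan is to prove both directions of the Gauss mean value theorem. The key tool throughout will be Green's second identity, which for two functions $u,v \in C^2(\overline{B_r(x)})$ reads
\[
\int_{B_r(x)} \bigl( u \nabla^2 v - v \nabla^2 u \bigr) \, \D y
= \int_{\partial B_r(x)} \bigl( u \, \partial_n v - v \, \partial_n u \bigr) \, \D S_y .
\]
First I would establish the necessity direction. Given that $u$ is harmonic, I want to show the surface average equals the central value. The standard approach is to consider the surface average as a function of the radius,
\[
g(r) = M^\circ(u, \partial B_r(x)) = \frac{1}{m \omega_m r^{m-1}} \int_{\partial B_r(x)} u(y) \, \D S_y ,
\]
rescale the integral onto the unit sphere via $y = x + r\xi$ with $|\xi|=1$, differentiate in $r$ under the integral sign, and convert back to an integral over $\partial B_r(x)$ of the normal derivative $\partial_n u$. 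By the divergence theorem this equals a multiple of $\int_{B_r(x)} \nabla^2 u \, \D y$, which vanishes since $u$ is harmonic. Hence $g$ is constant in $r$, and letting $r \to 0$ (using continuity of $u$) gives $g(r) = u(x)$, which is precisely \eqref{Ms}.

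For the sufficiency direction, I would argue by contradiction. Suppose $u \in C^2(D)$ satisfies \eqref{Ms} for every admissible sphere but is not harmonic, so $\nabla^2 u(x_0) \neq 0$ at some point $x_0 \in D$, say $\nabla^2 u(x_0) > 0$. By continuity $\nabla^2 u > 0$ on some small ball $B_\rho(x_0) \subset D$. Running the computation from the first part in reverse, the derivative $g'(r)$ equals a positive multiple of $\int_{B_r(x_0)} \nabla^2 u \, \D y > 0$ for $0 < r \le \rho$, so $g$ is strictly increasing; combined with $g(0^+) = u(x_0)$ this forces $M^\circ(u, \partial B_r(x_0)) > u(x_0)$, contradicting \eqref{Ms}. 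The case $\nabla^2 u(x_0) < 0$ is symmetric.

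I expect the main technical obstacle to be the careful justification of differentiating $g(r)$ under the integral sign and the conversion between the unit-sphere and radius-$r$ sphere parametrizations, including tracking the Jacobian factor $r^{m-1}$ and relating the radial derivative of $u$ to the outward normal derivative $\partial_n u$. Once the identity $g'(r) = \frac{1}{m \omega_m r^{m-1}} \int_{B_r(x)} \nabla^2 u \, \D y$ is established cleanly, both implications follow immediately, so the real work is concentrated in this single differentiation-and-divergence-theorem step.
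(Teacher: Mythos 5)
Your proof is correct, but note that the paper itself gives no proof of this theorem: it states the result as classical, attributing the direct implication to Gauss \cite{G} and the converse to Koebe \cite{Ko}, so the only in-paper material to compare against is the sketch of the proof of the panharmonic analogue, Theorem~2.2. Against that sketch, your sufficiency argument is essentially the paper's converse argument specialized to $\mu = 0$: the paper likewise applies the Laplacian to the volume-mean identity, uses Green's first formula to turn the flux integral into $|S^{m-1}| r^{m-1} \partial_r M^\circ(u, \partial B_r(x))$, and arrives at $\int_{B_r(x)} [\nabla^2 u - \mu^2 u](y)\, \D y = 0$ for all admissible balls; where you finish by strict monotonicity of $g$ and a contradiction, the paper finishes by the mean value theorem for integrals and continuity as $r \to 0$ --- a cosmetic difference only. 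For the direct implication the routes genuinely diverge: the paper works through the Euler--Poisson--Darboux equation \eqref{EPD} and identifies the spherical mean as the unique solution of the Cauchy problem \eqref{g5}; in the harmonic case that problem reads $a_{rr} + (m-1) r^{-1} a_r = 0$, $a(0) = 1$, $a_r(0) = 0$, with unique solution $a \equiv 1$, which recovers \eqref{Ms}. Your direct computation --- $g'(r)$ equals $(m \omega_m r^{m-1})^{-1} \int_{B_r(x)} \nabla^2 u \, \D y$, hence $g$ is constant and $g(0^+) = u(x)$ --- is more elementary and self-contained, whereas the ODE/Cauchy-problem route is what scales to the modified Helmholtz equation, where the mean is no longer constant in $r$ and must be matched against $a^\circ(\mu r)$. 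The technical point you flag (differentiation under the integral sign after rescaling to the unit sphere) is indeed the only place requiring care, and it is unproblematic here: admissibility means $\overline{B_r(x)} \subset D$, so $u \in C^2$ on a neighbourhood of the closed ball and the standard dominated-differentiation argument applies.
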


\vspace{-2mm}

The assertion that \eqref{Ms} implies that $u$ is harmonic was proved by Koebe
\cite{Ko} in 1906. Integrating \eqref{Ms} with respect to $r$ over $(0, R)$, one
obtains the following.

\vspace{-1mm}

\begin{corollary}
Let $u \in C^2 (D)$ be harmonic in a domain $D \subset \RR^m$, $m \geq 2$.
Then\\[-3mm]
\begin{equation}
M^\bullet (u, B_R (x)) = u (x)
\label{Mb}
\end{equation}
for every $x \in D$ and each admissible ball $B_R (x)$. The converse is also true.
Moreover,\\[-3mm]
\begin{equation}
M^\bullet (u, B_R (x)) = M^\circ (u, \partial B_R (x)) \label{Mbs}
\end{equation}
for every such ball.
\end{corollary}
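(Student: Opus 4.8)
The plan is to deduce all three assertions from the spherical mean value equality \eqref{Ms} of Theorem~2.1, exactly as the sentence preceding the corollary suggests. First I would prove \eqref{Mb} by writing the volume integral over $B_R (x)$ in polar coordinates centred at $x$ and decomposing it into concentric spherical shells,
\[
\int_{B_R (x)} u (y) \, \D y = \int_0^R \left( \int_{\partial B_r (x)} u (y) \, \D S_y \right) \D r = \int_0^R |\partial B_r| \, M^\circ (u, \partial B_r (x)) \, \D r .
\]
Whenever $B_R (x)$ is admissible, so is every intermediate sphere $\partial B_r (x)$ with $0 < r \leq R$, hence Theorem~2.1 gives $M^\circ (u, \partial B_r (x)) = u (x)$. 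Using $|\partial B_r| = m \, \omega_m r^{m-1}$, the last integral becomes $u (x) \int_0^R m \, \omega_m r^{m-1} \, \D r = \omega_m R^m \, u (x) = |B_R| \, u (x)$, and dividing by $|B_R|$ yields \eqref{Mb}.

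The middle equality \eqref{Mbs} then follows immediately: \eqref{Mb} identifies $M^\bullet (u, B_R (x))$ with $u (x)$, while Theorem~2.1 identifies $M^\circ (u, \partial B_R (x))$ with the same value $u (x)$, so the volume and surface means coincide.

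For the converse I would reverse this computation. Assuming $M^\bullet (u, B_R (x)) = u (x)$ for every $x$ and each admissible ball, I multiply by $|B_R| = \omega_m R^m$ and again pass to polar coordinates, obtaining
\[
\int_0^R m \, \omega_m r^{m-1} \, M^\circ (u, \partial B_r (x)) \, \D r = \omega_m R^m \, u (x)
\]
for all admissible $R$. Since $u$ is continuous, the integrand is a continuous function of $r$; differentiating both sides in $R$ and cancelling the factor $m \, \omega_m R^{m-1}$ recovers $M^\circ (u, \partial B_R (x)) = u (x)$, i.e.\ the spherical equality \eqref{Ms}. By the Koebe part of Theorem~2.1 this forces $u$ to be harmonic, which establishes the converse.

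The main obstacle is the differentiation step in the converse: one must confirm that $r \mapsto M^\circ (u, \partial B_r (x))$ is genuinely continuous before invoking the fundamental theorem of calculus. This is not hard once the surface integral is rewritten over the fixed unit sphere as $M^\circ (u, \partial B_r (x)) = |S^{m-1}|^{-1} \int_{S^{m-1}} u (x + r \theta) \, \D \sigma (\theta)$, since continuity (indeed smoothness) in $r$ then follows from the continuity of $u$ together with a standard dominated-convergence argument on the compact sphere; this legitimately extracts the pointwise identity \eqref{Ms} from the integrated one.
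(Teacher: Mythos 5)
Your proof is correct and takes essentially the same route as the paper, which obtains \eqref{Mb} precisely by integrating the spherical identity \eqref{Ms} with respect to $r$ over $(0,R)$ (your shell decomposition is just this integration written out), with \eqref{Mbs} following because both means equal $u(x)$. The paper merely asserts the converse without proof, and your differentiation argument recovering \eqref{Ms} from the integrated identity and then invoking the Koebe part of Theorem~2.1 — including the check that $r \mapsto M^\circ (u, \partial B_r (x))$ is continuous — is a valid way to supply it.
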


Let us turn to analogous equalities for panharmonic functions; they involve the
following coefficients
\begin{equation} 
a^\circ (\mu r) = \Gamma \left( \frac{m}{2} \right) \frac{I_{(m-2)/2} (\mu r)} {(\mu
r / 2)^{(m-2)/2}} \quad \mbox{and} \quad a^\bullet (\mu r) = \Gamma \left(
\frac{m}{2} + 1 \right) \frac{I_{m/2} (\mu r)}{(\mu r / 2)^{m/2}} \, , \label{cibu}
\end{equation}
where $I_\nu$ denotes the modified Bessel function of order $\nu$.


\begin{theorem}
Let $D$ be a domain in $\RR^m$, $m \geq 2$. Then $u \in C^2 (D)$ is panharmonic in
$D$ if and only if for every $x \in D$
\begin{equation}
M^\circ (u, \partial B_r (x)) = a^\circ (\mu r) \, u (x) \label{Mps}
\end{equation}
for each admissible sphere $\partial B_r (x)$.
\end{theorem}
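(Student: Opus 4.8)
The plan is to establish the panharmonic mean value equality \eqref{Mps} by reducing the spherical average of $u$ to an ordinary differential equation in the radius $r$, exactly paralleling the classical Gauss/Koebe argument but with the modified Helmholtz equation in place of Laplace's equation. First I would fix $x \in D$ and an admissible sphere, and introduce the spherical mean $\phi(r) = M^\circ(u, \partial B_r(x))$ as a function of $r$ on the relevant interval. By the standard change to polar coordinates and differentiation under the integral sign, one relates $\phi'(r)$ to the flux of $\nabla u$ through $\partial B_r(x)$; applying the divergence theorem and then the equation $\nabla^2 u = \mu^2 u$ converts this flux into a volume integral of $\mu^2 u$ over $B_r(x)$. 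Writing that volume integral back in terms of the spherical means $\phi(\rho)$ for $\rho \le r$ yields a clean second-order linear ODE for $\phi$: namely $\phi''(r) + \frac{m-1}{r}\phi'(r) = \mu^2 \phi(r)$, the radial modified Helmholtz operator.

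Next I would solve this ODE subject to the regularity and initial conditions forced by the setup. The substitution $\phi(r) = r^{-(m-2)/2} \psi(r)$ transforms the radial operator into the modified Bessel equation of order $(m-2)/2$, whose solutions are $I_{(m-2)/2}(\mu r)$ and $K_{(m-2)/2}(\mu r)$. Since $\phi$ must stay bounded and satisfy $\phi(0^+) = u(x)$ (the spherical mean over a degenerate sphere returns the center value by continuity of $u$), the singular $K$-solution is excluded and the normalization is fixed. This pins down $\phi(r) = a^\circ(\mu r)\, u(x)$ with precisely the coefficient in \eqref{cibu}, once one checks that the prefactor $\Gamma(m/2)/(\mu r/2)^{(m-2)/2}$ is exactly what makes $a^\circ(\mu r) \to 1$ as $r \to 0^+$ via the series expansion $I_\nu(z) \sim (z/2)^\nu/\Gamma(\nu+1)$. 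This establishes the forward implication.

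For the converse, I would assume \eqref{Mps} holds for every admissible sphere and deduce that $u$ is panharmonic. The cleanest route is to reuse the ODE computation: since $\phi(r) = M^\circ(u, \partial B_r(x))$ always satisfies $\phi'(r) = \frac{1}{|\partial B_r(x)|}\int_{B_r(x)} \nabla^2 u\, \D y$ (a purely kinematic identity requiring only $u \in C^2$), substituting the assumed form $\phi(r) = a^\circ(\mu r)\, u(x)$ and differentiating produces an integral identity that, after dividing by $|B_r(x)|$ and letting $r \to 0^+$, forces $\nabla^2 u(x) = \mu^2 u(x)$ at the arbitrary center $x$. Alternatively one can invoke the Koebe-type characterization: \eqref{Mps} says the spherical mean equals the known solution of the radial equation, and differentiating twice at $r=0$ recovers the PDE pointwise.

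The main obstacle I expect is bookkeeping the normalizing constants and the small-$r$ asymptotics so that the coefficient $a^\circ(\mu r)$ emerges with the exact Gamma-function prefactor in \eqref{cibu} rather than up to an unspecified constant; this requires careful use of the series for $I_\nu$ and of the surface-area factor $|\partial B_r| = m\,\omega_m r^{m-1}$. A secondary technical point is justifying differentiation under the integral and the limit $\phi(0^+) = u(x)$, but these follow routinely from $u \in C^2(D)$ and the admissibility (hence compactness) of the closed balls involved, so I do not anticipate genuine difficulty there. The limiting case $\mu \to 0$ should recover \eqref{Ms}, which serves as a useful consistency check on the constants.
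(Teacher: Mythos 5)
Your proposal is correct, and both directions rest on the same key fact as the paper's proof: the spherical mean $\phi(r)=M^\circ(u,\partial B_r(x))$ of a $C^2$ function satisfies a radial identity coming from Green's formula, which for panharmonic $u$ becomes the ODE $\phi''+(m-1)r^{-1}\phi'=\mu^2\phi$ --- the paper imports this from the Euler--Poisson--Darboux equation \eqref{EPD}, while you re-derive it via the divergence theorem. The routes diverge in how each implication is closed. For the forward direction the paper never solves the ODE: using \eqref{diff} it verifies that $a^\circ(\mu r)$ solves the Cauchy problem \eqref{g5} and concludes that $a^\circ(\mu r)\,u(x)-\phi(r)$ vanishes identically by uniqueness of that Cauchy problem; you instead solve the ODE explicitly through the Bessel substitution and discard the $K_{(m-2)/2}$ branch by boundedness at $r=0$. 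This is equivalent, but it obliges you to check the small-$r$ asymptotics of $K_\nu$, including the logarithmic case $\nu=0$ when $m=2$, which the uniqueness phrasing sidesteps. For the converse your argument is genuinely more streamlined: differentiating the hypothesis $\phi(r)=a^\circ(\mu r)\,u(x)$, equating with the kinematic identity $\phi'(r)=|\partial B_r(x)|^{-1}\int_{B_r(x)}\nabla^2u\,\D y$, dividing by $|B_r(x)|$ and letting $r\to0$ gives $\nabla^2u(x)=\mu^2u(x)$ pointwise, and the constants do match since $I_{m/2}(z)\sim(z/2)^{m/2}/\Gamma(m/2+1)$ and $\Gamma(m/2+1)=(m/2)\,\Gamma(m/2)$. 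The paper instead first integrates \eqref{Mps} to the volume identity \eqref{Rh}, applies the Laplacian, and deduces $\int_{B_r(x)}[\nabla^2u-\mu^2u]\,\D y=0$ for every admissible ball, finishing with the mean value theorem for integrals and continuity; that detour is longer, but it produces the volume-mean characterization \eqref{Mpb} of Corollary 2.2 as a by-product, whereas your version is shorter and keeps the whole converse at the level of spherical means.
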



Formula \eqref{Mps} has particularly simple form for $m=3$ because $a^\circ (\mu r)
= \sinh \mu r / (\mu r)$, which was proved by C.~Neumann \cite{NC} as early as 1896.
Duffin independently rediscovered his proof (see \cite{D}, pp.~111--112), but for
the two-dimensional case with $a^\circ (\mu r) = I_0 (\mu r)$. Finally, the author
\cite{Ku2} derived \eqref{Mps} for any $m \geq 2$ and announced \cite{Ku1} the
convrse assertion; its proof is given below.


\begin{proof}[A sketch of the proof of Theorem 2.2]
First, we outline the derivation of \eqref{Mps} for a panharmonic $u$. It based on
the Euler--Poisson--Darboux equation
\begin{equation}
M^\circ_{rr} + (m-1) \, r^{-1} M^\circ_r = \nabla^2_x M^\circ  \label{EPD}
\end{equation}
satisfied by $M^\circ (u, \partial B_r (x))$ for $r > 0$ (see \cite{J}, Chapter IV).

By virtue of the following relations
\begin{equation}
z I_{\nu+1} (z) + 2 \nu I_\nu (z) - z I_{\nu-1} (z) = 0 \, , \ \ \ [z^{-\nu} I_\nu
(z)]' = z^{-\nu} I_{\nu+1} (z)  \label{diff}
\end{equation}
(see \cite{Wa}, p. 79), it is easy to show that a unique solution of the Cauchy
problem
\begin{equation}
a_{rr} + (m-1) \, r^{-1} a_r - \mu^2 a = 0 , \ \ a (0) = 1 , \ \ a_r (0) = 0 \, ,
\label{g5}
\end{equation}
is $a (r) = a^\circ (\mu r)$. Combining \eqref{Hh} and \eqref{EPD}, we see that
$a^\circ (\mu r) \, u (x) - M^\circ (u, \partial B_r (x))$ satisfies equation in
\eqref{g5} with zero initial conditions. Hence it vanishes identically, thus
yielding~\eqref{Mps}.
\end{proof}


Prior to proving the converse assertion, let us consider some consequences of
\eqref{Mps}. Since $a^\circ (0) = 1$, the first initial condition \eqref{g5} yields
that \eqref{Mps} turns into \eqref{Ms} as $\mu \to 0$. To determine how the mean
value of a panharmonic function $u$ depends on radii of admissible spheres centred
at an arbitrary point $x \in D$, we notice that the behaviour of $M^\circ (u,
\partial B_r (x))$ is the same as that of $a^\circ (\mu r)$. The latter is
continuous and increases monotonically by the second formula \eqref{diff}; hence
\begin{equation*}
|M^\circ (u, \partial B_r (x))| > |u (x)| \ \ \mbox{for} \ r > 0 .
\label{sub}
\end{equation*}
Moreover, Poisson's integral for $I_\nu$ (see \cite{NU}, p.~223) implies that:
\begin{equation}
a^\circ (\mu r) = \int_0^1 (1 - s^2)^{(m - 3)/2} \cosh (\mu r s) \, \D s \, .
\label{PI}
\end{equation}
Hence, $M^\circ (u, \partial B_r (x))$ is a convex function of $r$.

Integrating equality \eqref{Mps} with respect to $r$ over $(0, R)$ and using
formula
\begin{equation*}
\int_0^x \!\! x^{1 + \nu} I_\nu (x) \, \D x = x^{1 + \nu} I_{\nu + 1} (x) \, , \ \ \Re
\, \nu > -1 . \label{PBM}
\end{equation*}
(see 1.11.1.5, \cite{PBM}) with $\nu = (m-2)/2$, one obtains the following.


\begin{corollary}
Let $u \in C^2 (D)$ be panharmonic in a domain $D \subset \RR^m$, $m \geq 2$. Then
\begin{eqnarray}
&& \ \ \ \ \ \ \ \ \ \ \ \ M^\bullet (u, B_R (x)) = a^\bullet (\mu R) \, u (x) \, ,
\label{Mpb} \\ && a^\circ (\mu R) \, M^\bullet (u, B_R (x)) = a^\bullet (\mu R) \,
M^\circ (u, \partial B_R (x)) \label{Mpbs}
\end{eqnarray}
for every $x \in D$ and each admissible ball $B_R (x)$. The converse is also true.
\end{corollary}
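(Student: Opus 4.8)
The plan is to derive the two displayed equalities \eqref{Mpb} and \eqref{Mpbs} from the spherical mean identity \eqref{Mps} by integrating in the radial variable, exactly as Corollary~2.1 was obtained from \eqref{Ms}, and then to establish the converse by appealing to Theorem~2.2. First I would prove \eqref{Mpb}. Writing the volume mean as an average of spherical means, one has the elementary coarea identity
\[
M^\bullet (u, B_R (x)) = \frac{1}{|B_R|} \int_0^R M^\circ (u, \partial B_r (x)) \, |\partial B_r| \, \D r = \frac{m}{R^m} \int_0^R r^{m-1} M^\circ (u, \partial B_r (x)) \, \D r \, ,
\]
since $|\partial B_r| = m \omega_m r^{m-1}$ and $|B_R| = \omega_m R^m$. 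Substituting \eqref{Mps} gives
\[
M^\bullet (u, B_R (x)) = \frac{m}{R^m} \, u (x) \int_0^R r^{m-1} a^\circ (\mu r) \, \D r \, ,
\]
so the whole matter reduces to the single integral $\int_0^R r^{m-1} a^\circ (\mu r) \, \D r$. Here I would insert the definition of $a^\circ$ from \eqref{cibu}, make the substitution $x = \mu r$ to reach an integral of the form $\int_0^{\mu R} x^{1+\nu} I_\nu (x) \, \D x$ with $\nu = (m-2)/2$, and apply the quoted formula 1.11.1.5 from \cite{PBM}, which evaluates it as $(\mu R)^{1+\nu} I_{\nu+1}(\mu R)$. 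Collecting the powers of $\mu R$ and the Gamma factors, and recognising $\Gamma(m/2+1) = (m/2)\Gamma(m/2)$, the result should collapse precisely to $a^\bullet (\mu R) \, u(x)$, as defined in \eqref{cibu}; this bookkeeping of constants is where I expect the only real friction, so I would keep careful track of the factor $m/2$ relating $\Gamma(m/2)$ and $\Gamma(m/2+1)$.

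With \eqref{Mpb} in hand, \eqref{Mpbs} is immediate and needs no further integration. From \eqref{Mps} we have $M^\circ (u, \partial B_R (x)) = a^\circ (\mu R) \, u(x)$ and from \eqref{Mpb} we have $M^\bullet (u, B_R (x)) = a^\bullet (\mu R) \, u(x)$. Eliminating the common factor $u(x)$ between these two relations, i.e. multiplying the first by $a^\bullet(\mu R)$ and the second by $a^\circ(\mu R)$, yields
\[
a^\circ (\mu R) \, M^\bullet (u, B_R (x)) = a^\circ(\mu R) \, a^\bullet(\mu R) \, u(x) = a^\bullet (\mu R) \, M^\circ (u, \partial B_R (x)) \, ,
\]
which is exactly \eqref{Mpbs}. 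This step is purely algebraic, requires no special-function input, and holds for all admissible balls simultaneously.

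Finally I would address the converse, namely that validity of \eqref{Mpb} for every $x \in D$ and every admissible ball $B_R(x)$ forces $u$ to be panharmonic. The natural route is to reduce the volume-mean identity back to the spherical-mean identity and then invoke the converse half of Theorem~2.2. Starting from $\int_0^R r^{m-1} M^\circ(u, \partial B_r(x)) \, \D r = \tfrac{1}{m} R^m a^\bullet(\mu R) \, u(x)$, which is \eqref{Mpb} rewritten via the coarea formula, I would differentiate both sides with respect to $R$ (justified since $u \in C^0(\overline D)$ makes the integrand continuous). Differentiating the left side gives $R^{m-1} M^\circ(u, \partial B_R(x))$, and differentiating the right side and dividing by $R^{m-1}$ should reproduce $a^\circ(\mu R) \, u(x)$, because $a^\bullet$ and $a^\circ$ are linked by precisely the differentiation relation behind formula 1.11.1.5 (the second identity in \eqref{diff}). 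This recovers \eqref{Mps} for all admissible spheres, whence Theorem~2.2 yields that $u$ is panharmonic. The main obstacle throughout is not conceptual but computational: verifying that the Bessel-function constants match so that the $R$-derivative of the volume-mean side regenerates $a^\circ(\mu R)$ exactly, rather than a scalar multiple of it.
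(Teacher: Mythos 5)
Your proposal is correct and takes essentially the same route as the paper: the forward direction is exactly the paper's argument (integrate \eqref{Mps} against $r^{m-1}$ over $(0,R)$ and evaluate via formula 1.11.1.5 of \cite{PBM} with $\nu=(m-2)/2$), and \eqref{Mpbs} is obtained, as in the text, by combining \eqref{Mpb} with \eqref{Mps} at $r=R$. For the converse, which the paper merely asserts, your plan of differentiating the coarea identity in $R$ to recover \eqref{Mps} and then invoking Theorem~2.2 is sound --- the constants do match, since $\frac{\D}{\D R}\bigl[(\mu R)^{m/2} I_{m/2}(\mu R)\bigr] = \mu\,(\mu R)^{m/2} I_{(m-2)/2}(\mu R)$ turns $R^{m}a^\bullet(\mu R)/m$ back into $R^{m-1}a^\circ(\mu R)$ --- and it is consistent with the paper's own continued sketch of Theorem~2.2, which works directly from \eqref{Rh}, the equivalent form of \eqref{Mpb}.
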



As $\mu \to 0$, the volume mean formula \eqref{Mpb} for a panharmonic function turns
into \eqref{Mb} for a harmonic one. Moreover, the dependence on $r$ for the volume
mean is the same as for the spherical one. Finally, equality \eqref{Mpbs} (a
consequence of \eqref{Mpb} and \eqref{Mps} with $r = R$) implies that $M^\bullet (u,
B_R (x)) < M^\circ (u, \partial B_R (x))$; indeed,
\[ \frac{M^\bullet (u, B_R (x))}{M^\circ (u, \partial B_R (x))} =
\frac{a^\bullet (\mu R)}{a^\circ (\mu R)} < 1 \, ,
\]
which immediately follows from the definition of $a^\bullet$ and $a^\circ$ and the
first formula \eqref{diff}.


\begin{proof}[A sketch of the proof of Theorem 2.2 (continued)]
To show that $u$ is panharmonic in $D$ when \eqref{Mps} is valid, we notice that a
consequence of \eqref{Mps}, namely, \eqref{Mpb} (with $R$ changed to $r$) is
equivalent to
\begin{equation}
\left( \frac{2 \pi r}{\mu} \right)^{m/2} I_{m/2} (\mu r) \, u (x) = \int_{|y| < r}
\!\! u (x+y) \, \D y . \label{Rh}
\end{equation}
Applying the Laplacian to the integral on the right-hand side, we obtain
\[ \int_{|y| < r} \!\! \nabla^2_x \, u (x+y) \, \D y = \int_{|y| = r} \!\! \nabla_x \,
u (x+y) \cdot \frac{y}{r} \, \D S_y \, .
\]
Here the equality is a consequence of Green's first formula. By changing variables
this can be written as follows:
\[ r^{m-1} \frac{\partial}{\partial r} \int_{|\theta|=1} \!\! u (x + r \theta) \, 
\D S^{m-1}_\theta = |S^{m-1}| r^{m-1} \frac{\partial}{\partial r} M^\circ (u,
\partial B_r (x)) \, .
\]
In view of \eqref{Mps} and the second formula \eqref{diff}, we have that
\[ \frac{\partial}{\partial r} M^\circ (u, \partial B_r (x)) = \frac{\mu I_{m/2} 
(\mu r)}{(\mu r / 2)^{(m-2)/2}} \, u (x) \, .
\]
Combining the above considerations and \eqref{Rh}, we conclude that
\[ \int_{|y| < r} \!\! [ \nabla^2_x \, u - \mu^2 u ] \, (x+y) \, \D y = 0
\]
for every $x \in D$ and all $r$ such that $B_r (x)$ is admissible. Hence, there
exists $y (r, x) \in B_r (x)$ such that $[ \nabla^2 \, u - \mu^2 u ] \, (y (r, x)) =
0$. Since $y (r, x) \to x$ as $r \to 0$, it follows by continuity that $u$ is
panharmonic in $D$.
\end{proof}


\subsection{Annuli}

Let $m \geq 3$ and $r_2 > r_1 \geq 0$; following Armitage and Goldstein \cite{AG},
we introduce
\begin{equation}
r_* = \left( \frac{2}{m} \frac{r_2^m - r_1^m}{r_2^2 - r_1^2} \right)^{1/(m-2)} .
\label{r*}
\end{equation}
The convexity of $t \mapsto t^{m-2}$ implies that $r_* \in (r_1, r_2)$, and for this
reason the domain
\[ A (r_1, r_2) = \{ x \in \RR^m : r_1 < |x| < r_2 \} \, ,
\]
is called an $r_*$-annulus (below, we write $A$ for brevity). For $m = 2$, such an
annulus has
\begin{equation}
r_* = \exp \left( \frac{r_2^2 \log r_2 - r_1^2 \log r_1}{r_2^2 - r_1^2} -
\frac{1}{2} \right) ;
\label{r*2}
\end{equation}
see \cite{GS}, p.~437. Now, we are in a position to formulate the following.


\begin{theorem}
Let $A \subset \RR^m$, $m \geq 2$, be an $r_*$-annulus. If $u \in C^2 (A)$ is
harmonic and integrable over $A$, then
\begin{equation}
M^\bullet (u, A) = M^\circ (u, \partial B_{r_*}) \, . \label{Ma}
\end{equation}
\end{theorem}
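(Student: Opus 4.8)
The plan is to reduce both sides of \eqref{Ma} to quantities depending only on the spherical mean function $g (r) := M^\circ (u, \partial B_r)$, $r \in (r_1, r_2)$, and then to exploit the special structure of $g$ for a harmonic $u$. Writing the volume integral in polar coordinates gives $\int_A u \, \D x = \int_{r_1}^{r_2} |\partial B_r| \, g (r) \, \D r$; since $|A| = \omega_m (r_2^m - r_1^m)$ and $|\partial B_r| = m \, \omega_m r^{m-1}$, the left-hand side of \eqref{Ma} becomes
\[ M^\bullet (u, A) = \frac{m}{r_2^m - r_1^m} \int_{r_1}^{r_2} r^{m-1} g (r) \, \D r , \]
whereas the right-hand side is just $g (r_*)$. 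Thus the whole identity becomes a statement about the single function $g$.

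The key step is to determine $g$ explicitly. First I would differentiate under the integral sign to get $g' (r) = |\partial B_r|^{-1} \int_{\partial B_r} \partial u / \partial n \, \D S$, and then apply Green's first formula on the sub-annulus $A (r_1, r)$ together with $\nabla^2 u = 0$ to conclude that $\int_{\partial B_r} \partial u / \partial n \, \D S$ is independent of $r$. Hence $r^{m-1} g' (r)$ is constant, and one integration yields the classical representation
\[ g (r) = \alpha + \beta \, r^{2-m} \ \ (m \geq 3) , \qquad g (r) = \alpha + \beta \log r \ \ (m = 2) , \]
with constants $\alpha, \beta$ depending on $u$. Note that the concentric spheres $\partial B_r$ are \emph{not} admissible with respect to $A$ when $r_1 > 0$, so the Gauss property \eqref{Ms} cannot be invoked to make $g$ constant; the full two-parameter family of radial harmonics is genuinely needed, and deriving this representation is the heart of the argument.

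It then remains to substitute this representation into the displayed formula for $M^\bullet (u, A)$ and carry out the elementary radial integrals $\int_{r_1}^{r_2} r^{m-1} \D r$, $\int_{r_1}^{r_2} r \, \D r$, and (for $m = 2$) $\int_{r_1}^{r_2} r \log r \, \D r$. The constant term $\alpha$ contributes $\alpha$ to both $M^\bullet (u, A)$ and $g (r_*)$, so matching the two sides reduces to the coefficient of $\beta$, i.e. to
\[ \frac{m}{2} \cdot \frac{r_2^2 - r_1^2}{r_2^m - r_1^m} = r_*^{2-m} \ \ (m \geq 3) , \qquad \frac{r_2^2 \log r_2 - r_1^2 \log r_1}{r_2^2 - r_1^2} - \frac{1}{2} = \log r_* \ \ (m = 2) . \]
But these are precisely the defining relations \eqref{r*} and \eqref{r*2} of $r_*$, so the coefficients of $\beta$ agree as well and \eqref{Ma} follows. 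I expect the only real obstacle to be the derivation of the representation for $g$; once that is in hand the rest is bookkeeping, and the role of $r_*$ is exactly to force the $\beta$-terms to coincide.
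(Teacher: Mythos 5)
Your proof follows essentially the same route as the paper's (taken from Armitage and Goldstein): constancy of the flux $\int_{\partial B_r} \partial u / \partial n \, \D S$ across concentric spheres yields the radial representation $M^\circ (u, \partial B_r) = c_1 r^{2-m} + c_2$, and integrating this against $r^{m-1}$ reduces \eqref{Ma} to the defining relation \eqref{r*}. The only differences are minor: you additionally carry out the $m = 2$ case explicitly, recovering \eqref{r*2} (the paper proves only $m \geq 3$ and quotes the two-dimensional formula from the literature), and your application of Green's formula should be on $A (r_1', r)$ with $r_1 < r_1' < r$ rather than on $A (r_1, r)$, since $u$ is only $C^2$ in the open annulus --- precisely the precaution the paper's proof takes by working with $r_1 < r_1' < r_2' < r_2$.
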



\begin{proof}[Proof for $m \geq 3$, {\rm \cite{AG}}.]
For arbitrary $r_1', r_2'$ such that $r_1 <r_1' < r_2' < r_2$, we have
\[ \Bigg[ \int_{\partial B_{r_2'}} - \int_{\partial B_{r_1'}} \Bigg] \partial u /
\partial n \, \D S = 0 \, ,
\]
because $u$ is harmonic. This implies that
\[ r^{m-1} \D M^\circ (u, \partial B_{r}) / \D r = \mathrm{constant} \quad \mbox{for}
\ r \in (r_1, r_2) \, ,
\]
and so $M^\circ (u, \partial B_{r}) = c_1 r^{2-m} + c_2$, where $c_1$ and $c_2$ are
constants. Hence
\[ M^\bullet (u, A) = \frac{m}{r_2^m - r_1^m} \int_{r_1}^{r_2} M^\circ (u, \partial B_r)
\, r^{m-1} \, \D r = c_1 \frac{m}{2} \frac{r_2^2 - r_1^2}{r_2^m - r_1^m} + c_2 \, ,
\]
from which \eqref{Ma} follows in view of \eqref{r*}.
\end{proof}


\subsection{Strips}

In 1992, Armitage and Goldstein \cite{AG1} (see also \cite{AG3}) obtained the
following mean value property of harmonic functions on an infinite open strip $S (a,
b) = (a, b) \times \RR^m \subset \RR^{m+1}$; here $(a, b)$ is a bounded open
interval on, say, $t$-axis.


\begin{theorem}
Let $m \geq 2;$ if a real-valued function $u$ is harmonic and integrable on $S (a,
b)$, then
\begin{equation}
\int_{S (a, b)} u (t, x) \, \D t \D x = (b - a) \int_{\RR^m} u ( (a+b) / 2, x) \, \D
x \, . \label{Mst}
\end{equation}
\end{theorem}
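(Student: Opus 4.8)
The plan is to reduce \eqref{Mst} to the assertion that the profile function
\[ F (t) = \int_{\RR^m} u (t, x) \, \D x , \qquad t \in (a, b) , \]
is \emph{affine} in $t$. Indeed, by Fubini's theorem the left-hand side of \eqref{Mst} equals $\int_a^b F (t) \, \D t$, while the right-hand side is $(b-a) \, F ((a+b)/2)$; and for an affine $F$ the mean over $(a, b)$ coincides with the value at the midpoint. Thus the whole identity comes down to proving $F'' \equiv 0$.

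To compute $F''$ I would differentiate twice under the integral sign and invoke harmonicity of $u$ on the strip, which reads $u_{tt} + \nabla_x^2 u = 0$, where $\nabla_x^2$ is the Laplacian in the variables $x$ alone. This gives
\[ F'' (t) = \int_{\RR^m} u_{tt} (t, x) \, \D x = - \int_{\RR^m} \nabla_x^2 u (t, x) \, \D x , \]
so it remains only to show that, for each fixed $t \in (a, b)$, the total integral of the $x$-Laplacian over $\RR^m$ vanishes.

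The main obstacle, which both legitimises the differentiation under the integral and underlies the vanishing just mentioned, is a single decay estimate. Fix $t_0 \in (a, b)$ and $\delta > 0$ with $[t_0 - \delta, t_0 + \delta] \subset (a, b)$; then for every $x \in \RR^m$ the ball $B_\delta (t_0, x) \subset \RR^{m+1}$ lies inside the strip, and interior derivative estimates for harmonic functions bound $|\nabla u (t, x)|$ and $|\nabla^2 u (t, x)|$, for $t$ near $t_0$, by a constant times $\int_{B_\delta (t_0, x)} |u| \, \D y$. Integrating these pointwise bounds in $x$ and interchanging the order of integration (Tonelli), and noting that each point of $S (a, b)$ belongs to $B_\delta (t_0, x)$ only for a set of $x$ of measure at most $\omega_m \delta^m$, one obtains
\[ \int_{\RR^m} | \nabla_x u (t_0, x) | \, \D x \leq C \int_{S (a, b)} |u| < \infty , \]
and likewise $\nabla_x^2 u (t_0, \cdot) \in L^1 (\RR^m)$; the same bounds, uniform for $t$ in a neighbourhood of $t_0$, furnish an integrable dominating function that justifies differentiating $F$ twice under the integral sign.

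Finally, to see that $\int_{\RR^m} \nabla_x^2 u (t_0, x) \, \D x = 0$, I would apply the divergence theorem on a ball $B_\rho \subset \RR^m$,
\[ \int_{B_\rho} \nabla_x^2 u \, \D x = \int_{\partial B_\rho} \partial u / \partial n \, \D S , \]
and let $\rho \to \infty$. Since $\nabla_x u (t_0, \cdot) \in L^1 (\RR^m)$, writing its norm in polar coordinates shows that $\rho \mapsto \int_{\partial B_\rho} |\nabla_x u| \, \D S$ is integrable on $(0, \infty)$, so its $\liminf$ as $\rho \to \infty$ is zero; choosing radii $\rho_n \to \infty$ along which this boundary integral tends to $0$, while using $\nabla_x^2 u (t_0, \cdot) \in L^1 (\RR^m)$ to pass to the limit on the left, yields $\int_{\RR^m} \nabla_x^2 u (t_0, x) \, \D x = 0$. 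Hence $F'' \equiv 0$, $F$ is affine, and \eqref{Mst} follows.
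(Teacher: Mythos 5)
Your proposal is correct, and its skeleton agrees with the paper's: both reduce \eqref{Mst} to the assertion that the hyperplane mean $F(t)=\int_{\RR^m}u(t,x)\,\D x$ is affine (a first-degree polynomial) in $t$, after which Fubini's theorem and the midpoint property of affine functions give the identity. Where you diverge is in how affineness is established. The paper offers only a sketch that delegates this key fact to Armitage and Gardiner \cite{AG2}, whose study of hyperplane means rests on Fubini's theorem and the subharmonicity of $|u|$: the hyperplane mean of an integrable subharmonic function on a strip is convex in $t$, and for harmonic $u$ (applying this to $u$ and $-u$) one gets affineness together with the existence of $F(t)$ for every $t$. You instead prove $F''\equiv 0$ directly: interior derivative estimates in balls $B_\delta(t_0,x)$ combined with a Tonelli interchange give $\nabla_x u(t_0,\cdot),\,\nabla_x^2 u(t_0,\cdot)\in L^1(\RR^m)$, with bounds uniform for $t$ near $t_0$, which legitimises differentiating twice under the integral sign; then the divergence theorem on balls $B_{\rho_n}\subset\RR^m$, with radii chosen (via integrability of $\rho\mapsto\int_{\partial B_\rho}|\nabla_x u|\,\D S$) so that the boundary flux tends to zero, shows $\int_{\RR^m}\nabla_x^2 u(t_0,x)\,\D x=0$. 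Your argument is self-contained and elementary where the paper's is a citation; note only that you should record explicitly that the same mean value estimate bounds $|u(t_0,x)|$ itself by $C\int_{B_\delta(t_0,x)}|u|$, so that $F(t)$ is finite for \emph{every} $t\in(a,b)$ (Fubini alone gives it only for almost every $t$) --- a one-line addition with the machinery you already set up. The trade-off is that your route is tied to harmonic $u$, whereas the subharmonic-convexity route of \cite{AG2} needs no derivative estimates and simultaneously yields one-sided (quadrature-inequality) versions for subharmonic functions, which is why the literature follows it.
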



\begin{proof}[A sketch of the proof]
Armitage and Gardiner \cite{AG2} (see also papers cited therein) investigated
hyperplane means by virtue of Fubini's theorem and the fact that $|u|$ is
subharmonic. According to these results, there exists
\[ \int_{\RR^m} u (t, x) \, \D x \quad \mbox{for} \ t \in (a, b) \, ,
\]
and this function is a first degree polynomial in $t$, which implies \eqref{Mst}.
\end{proof}


Goldstein, Haussmann and Rogge \cite{GHR} extended this result to an
$m+l$-dimensional, $m,l \geq 2$, bi-infinite cylinder $C_r = B_r \times \RR^l_y$,
where $B_r$ is the $m$-dimensional open ball centred at the origin of $\RR^m_x$;
namely, they proved the following.


\begin{theorem}
Let $C_r \subset \RR^{m+l}_{x,y}$ be a bi-infinite open cylinder. If a real-valued
$u$ is harmonic and integrable on $C_r$, then
\begin{equation*}
\int_{C_r} u (x, y) \, \D x \D y = |B_r| \int_{\RR^l} u (0, y) \, \D y \, .
\label{MC}
\end{equation*}
\end{theorem}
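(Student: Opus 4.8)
The plan is to integrate out the $y$-variables and reduce the assertion to the volume mean value property of harmonic functions over the ball $B_r$, in exact analogy with the way the strip case (Theorem~2.4) reduces to a statement about hyperplane means. First I would define, for $x \in B_r$,
\[ \phi (x) = \int_{\RR^l} u (x, y) \, \D y \, , \]
so that, by Fubini's theorem, the left-hand side of the claimed equality equals $\int_{B_r} \phi (x) \, \D x$. If $\phi$ turns out to be harmonic on $B_r$, then formula \eqref{Mb} of Corollary~2.1 applied at the centre $x = 0$ gives
\[ \frac{1}{|B_r|} \int_{B_r} \phi (x) \, \D x = \phi (0) = \int_{\RR^l} u (0, y) \, \D y \, , \]
which is precisely the required identity after multiplying through by $|B_r|$.

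Two points must be secured: that $\phi$ is well defined (the inner integral converges for every $x \in B_r$, not merely for almost every $x$, and depends regularly on $x$) and that $\phi$ is harmonic. For the first I would invoke the same hyperplane-mean machinery used in the proof of Theorem~2.4: since $u$ is harmonic, $|u|$ is subharmonic, and the results of Armitage and Gardiner \cite{AG2} guarantee the existence and suitable smoothness of the slice integrals $x \mapsto \int_{\RR^l} u (x, y) \, \D y$. For the second, harmonicity of $u$ on $\RR^{m+l}$ means $\nabla_x^2 u = - \nabla_y^2 u$, so differentiating under the integral sign yields
\[ \nabla_x^2 \phi (x) = \int_{\RR^l} \nabla_x^2 u (x, y) \, \D y = - \int_{\RR^l} \nabla_y^2 u (x, y) \, \D y \, , \]
and the last expression is the integral of a Laplacian over all of $\RR^l$, which vanishes by the divergence theorem provided $\nabla_y u$ decays appropriately as $|y| \to \infty$.

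The main obstacle is exactly the justification of this last step: interchanging $\nabla_x^2$ with the integral and showing that $\int_{\RR^l} \nabla_y^2 u \, \D y = 0$. Integrability of $u$ over $C_r$ does not by itself control $\nabla_y u$ at infinity, so I would use interior gradient estimates for harmonic functions---bounding $\nabla u$ at a point by an average of $u$ over a small ball contained in $C_r$---to convert integrability of $u$ into decay of $\nabla_y u$ in the $y$-direction, making the flux across the spheres $\{ |y| = R \}$ tend to $0$ as $R \to \infty$. Once the interchange and the vanishing flux are established, harmonicity of $\phi$ follows, and with it the theorem; the remaining identification $\phi (0) = \int_{\RR^l} u (0, y) \, \D y$ is immediate from the definition. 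This is in essence the argument of Goldstein, Haussmann and Rogge \cite{GHR}.
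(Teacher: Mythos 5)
The survey states this theorem without proof, attributing it to Goldstein, Haussmann and Rogge \cite{GHR}, so there is no in-paper argument to match yours against; the closest internal comparison is the sketch given for Theorem~2.4, and your proof is exactly its natural analogue: where the strip case reduces to the hyperplane mean being an affine function of the single variable $t$, you reduce the cylinder case to the slice mean $\phi (x) = \int_{\RR^l} u (x, y) \, \D y$ being harmonic in the $m$ variables $x$, and then invoke the volume mean value property \eqref{Mb} at the centre of $B_r$. The reduction is sound, and your identification of the genuine difficulty --- justifying the interchange $\nabla_x^2 \phi = \int_{\RR^l} \nabla_x^2 u \, \D y$ and the vanishing of $\int_{\RR^l} \nabla_y^2 u \, \D y$ --- is accurate, as is the proposed remedy. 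To make it airtight: the interior estimate $|D^\alpha u (x,y)| \leq C_\alpha \rho^{-|\alpha|} M^\bullet (|u|, B_\rho (x,y))$ for $B_\rho (x,y)$ admissible in $C_r$, integrated in $y$ via Fubini, shows that $\sup_{x \in K} |D^\alpha u (x, \cdot)| \in L^1 (\RR^l)$ for every compact $K \subset B_r$. This single estimate delivers everything at once: $\phi$ is finite and continuous at every $x$ (not merely almost every $x$, settling your first point without outside help), it dominates the differentiation under the integral sign, and it gives $\nabla_y^2 u (x, \cdot) \in L^1 (\RR^l)$, so the flux $\int_{|y| = R} \partial u / \partial n \, \D S$ need only tend to zero along some sequence $R_j \to \infty$ --- which the $L^1$ bound guarantees --- rather than for all $R$, as your wording suggests.

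Two small corrections. First, $u$ is harmonic only on $C_r$, not on $\RR^{m+l}$; the identity $\nabla_x^2 u = - \nabla_y^2 u$ on $C_r$ is all you use, so this is cosmetic. Second, the appeal to Armitage and Gardiner \cite{AG2} is off target: their results concern hyperplane means $\int_{\RR^m} u (t, x) \, \D x$ as functions of a \emph{single} transverse variable $t$ and do not directly cover slice means depending on a multi-dimensional base variable $x$; the analogous statement for cylinders is part of what \cite{GHR} establish. Your elliptic-estimate argument proves the needed facts independently, so nothing essential is missing, but the citation should not carry the weight your write-up assigns it. Note also that the only \cite{GHR} machinery described in the survey (the Green's function relationship between $B_r \times \RR^l$ and $B_r$) pertains to the \emph{converse}, Theorem~4.7, so the closing claim that your argument is ``in essence'' theirs cannot be confirmed from the paper itself.
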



It is interesting whether the results of Sections~2.2 and 2.3 have analogues for
panharmonic functions.


{\centering \section{Characterization of balls via mean value properties} }

\subsection{Harmonic functions}

\noindent Along with Kuran's theorem, there are other characterizations of balls
based on mean value properties. The best known is the following assertion of
Aharonov, Schiffer and Zalcman \cite{ASZ}.

\begin{theorem} 
Let $D \subset \RR^m$, $m \geq 3$, be a bounded open set such that $D = \mathrm{int}
D$ and $\partial D$ has zero volume. For some fixed $x_0 \in D$ the identity
\begin{equation*}
|x_0 - y|^{2-m} = \frac{1}{|D|} \int_{D} |x - y|^{2-m} \, \D x
\end{equation*}
is valid for every $y \in \RR^m \setminus D$ if and only if $D = B_r (x_0)$, where
$r$ is such that $|D| = |B_r (x_0)|$.
\end{theorem}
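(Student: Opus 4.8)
The two implications have very different flavours. For the direct implication I would take $D=B_r(x_0)$ and fix a pole $y\in\RR^m\setminus\overline{B_r(x_0)}$; then $x\mapsto|x-y|^{2-m}$ is the Newtonian kernel with pole outside $\overline D$, hence harmonic throughout $D$, and the volume mean value property \eqref{Mb} of Corollary~2.1 gives directly $M^\bullet(|\cdot-y|^{2-m},D)=|x_0-y|^{2-m}$, which is the stated identity. The boundary poles $y\in\partial B_r(x_0)$ are recovered by continuity: since $m\geq 3$ the kernel is locally integrable, so $\int_D|x-y|^{2-m}\,\D x$ depends continuously on $y$ and the identity passes from the exterior to $\partial D$ by dominated convergence.

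For the converse I would pass to the Newtonian potential $U_D(y)=\int_D|x-y|^{2-m}\,\D x$, so that the hypothesis becomes $U_D(y)=g(y)$ for every $y\notin D$, where $g(y)=|D|\,|x_0-y|^{2-m}$ is the potential of the point mass $|D|\,\delta_{x_0}$. Distributionally $\nabla^2U_D=-c\,\chi_D$ and $\nabla^2g=-c\,|D|\,\delta_{x_0}$ with $c=(m-2)\,m\,\omega_m>0$. Let $B=B_r(x_0)$ be the concentric ball with $|B|=|D|$; the direct implication already yields $U_B=g$ off $B$, so the entire statement reduces to proving $U_D\equiv U_B$: indeed this forces $\chi_D=-c^{-1}\nabla^2U_D=\chi_B$ almost everywhere, and the normalisation $D=\mathrm{int}\,\overline D$ then gives $D=B$.

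The plan for this reduction is to identify $(U_D,D)$ as the solution of the obstacle problem attached to the point mass. The decisive ingredient is the one-sided bound $U_D\leq g$ on all of $\RR^m$; granting it, $D$ is exactly the non-contact set $\{U_D<g\}$ up to a null set, $\nabla^2U_D=-c$ there and $U_D=g$ on the complement, so $U_D$ is admissible for, and by complementarity equal to, the unique solution of that obstacle problem. For the ball the corresponding bound is explicit: a short computation gives $U_B(y)=|D|\,r^{2-m}+\frac{c}{2m}\,(r^2-|y-x_0|^2)$ inside $B$, and comparison of this quadratic with $g$ shows $U_B<g$ strictly in the interior of $B$ with equality on $\partial B$. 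Since the obstacle $g$ and the right-hand side $-c$ are invariant under every rotation about $x_0$, the unique solution of the obstacle problem is radial about $x_0$ and its non-contact set is a ball centred at $x_0$; matching this solution with $U_D$ identifies $D$ with that ball, and $|D|=|B|$ fixes the radius.

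The hard part is the sign inequality $U_D\leq g$. It fails for a generic domain and cannot be read off from the maximum principle, because $g-U_D$ is merely subharmonic away from $x_0$ and subharmonic functions may have interior minima; establishing it is equivalent to exhibiting $\chi_D\,\D x$ as the partial balayage of $|D|\,\delta_{x_0}$, i.e. to the variational identification of $U_D$ with the obstacle solution, and this --- not the ensuing symmetry argument --- is where essentially all the difficulty lies. A different route avoids balayage altogether: the hypothesis is Kuran's condition (Theorem~1.1) restricted to the kernels $|\cdot-y|^{2-m}$ with $y\notin D$, so one may instead try to approximate every integrable harmonic function on $D$ by linear combinations of such kernels and invoke Theorem~1.1; here the approximation step, a Runge-type theorem that is delicate when $\RR^m\setminus D$ is disconnected, becomes the principal obstacle.
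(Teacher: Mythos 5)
Your direct implication is fine, but the converse, as written, has a genuine gap that you yourself name and then do not fill: the inequality $U_D\leq g$ on all of $\RR^m$ is never established, and everything downstream (the identification of $U_D$ with the obstacle solution, the radiality of the non-contact set, hence $D=B$) hangs on it. Calling it ``equivalent to exhibiting $\chi_D\,\D x$ as the partial balayage of $|D|\,\delta_{x_0}$'' is circular, since that identification is precisely what a proof must deliver; as you correctly observe, the maximum principle cannot supply it, because $g-U_D$ is subharmonic in $D\setminus\{x_0\}$ and may a priori dip below its zero boundary values. Your fallback route --- Kuran's Theorem~1.1 plus $L^1$-approximation of integrable harmonic functions by combinations of kernels with poles off $D$ --- has the same defect: the Runge-type step is exactly the hard point (delicate, as you note, when $\RR^m\setminus D$ is disconnected) and is not supplied. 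Even granting $U_D\le g$, the claim that the radial non-contact set must be a ball rather than, say, a ball plus an annular shell would need a word of justification. (For the record: the survey under review states this theorem without proof, citing \cite{ASZ} and the remark in \cite{NV} on its kinship with Kuran's theorem, so the comparison below is with the cited source.)

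The gap closes, elementarily, if you compare $U_D$ with the ball potential $U_B$ instead of with the point potential $g$; this is in essence the argument of \cite{ASZ}. Put $v=U_D-U_B$, where $B=B_r(x_0)$ and $|B|=|D|$. On $\RR^m\setminus D$ the hypothesis gives $v=g-U_B\geq 0$, by your own computation $U_B\leq g$; and in $D$ one has distributionally $\nabla^2 v=-c\,(\chi_D-\chi_B)=-c\,(1-\chi_B)\leq 0$, so $v$ is superharmonic there. The minimum principle on the bounded open set $D$ (applied componentwise) now yields $v\geq 0$ everywhere --- note that the sign obstruction which blocks $g-U_D\geq 0$ is absent here. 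Moreover $v=0$ off $D\cup B$, so choosing a ball $B_R(x_0)\supset D\cup B$ and using Fubini together with $U_{B_R}(x)=a-\frac{c}{2m}\,|x-x_0|^2$ on $B_R$, one gets
\[
\int_{\RR^m} v\,\D x=\int_D U_{B_R}\,\D x-\int_B U_{B_R}\,\D x
=-\frac{c}{2m}\Big(\int_{D\setminus B}|x-x_0|^2\,\D x-\int_{B\setminus D}|x-x_0|^2\,\D x\Big)<0
\]
unless $|D\,\triangle\,B|=0$, because $|x-x_0|>r$ a.e.\ on $D\setminus B$, $|x-x_0|<r$ on $B\setminus D$, and $|D\setminus B|=|B\setminus D|$. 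This contradicts $v\geq 0$; hence $\chi_D=\chi_B$ a.e., and your final regularity step ($D=\mathrm{int}\,\overline D$, which is what the printed hypothesis $D=\mathrm{int}\,D$ must mean) gives $D=B$. This comparison-plus-moment computation replaces both the balayage machinery and the approximation theorem, and it also explains why \cite{NV} files the result under Kuran's theorem: the two ingredients (a positivity obtained from the minimum principle, and a strict moment inequality) are the same as in \cite{K}; the heavier quadrature machinery of \cite{GS} is only needed for the annular analogues in Section~4.
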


The authors assumed $m = 3$ in this theorem, but it is straightforward to extend the
result to higher dimensions. Netuka and Vesel\'y \cite{NV}, p.~377, just mentioned
the note \cite{ASZ} as an application of Kuran's theorem to answering the question:
Must a bounded homogeneous solid in $\RR^3$, ``which gravitationally attracts each
point outside it as if all its mass were concentrated at a single point, [\dots] be
a ball?'' (see \cite{ASZ}, p.~331). Further discussion of this and related results can
be found in Cupini and Lanconelli's article \cite{CL}; see also references cited
therein.

Furthermore, Netuka and Vesel\'y formulated a two-dimensional version (see
\cite{NV}, Section~8) of the following general assertion.


\begin{theorem} 
Let $D \subset \RR^m$, $m \geq 2$, be a bounded $C^2$-domain. If
\begin{equation}
\frac{1}{|D|} \int_{D} h (x) \, \D x = \frac{1}{|\partial D|} \int_{\partial D} h
(x) \, \D S_x \label{3.1}
\end{equation}
for every harmonic in $D$ function $h \in C^1 (\overline D)$, then $D$ is a ball.
\end{theorem}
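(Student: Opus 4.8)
The plan is to show that the equality of the volume mean and the boundary mean over a $C^2$-domain $D$ forces $D$ to be a ball.

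Writing $\lambda = |D| / |\partial D|$, the hypothesis \eqref{3.1} is equivalent to
\[ \int_D h (x) \, \D x = \lambda \int_{\partial D} h (y) \, \D S_y \]
for every $h \in C^1 (\overline D)$ harmonic in $D$. First I would introduce the torsion function $v$, that is, the solution of the Dirichlet problem $\nabla^2 v = 1$ in $D$, $v = 0$ on $\partial D$; since $D$ is a bounded $C^2$-domain, $v$ exists, is unique, and is regular enough up to $\partial D$ that $\partial v / \partial n$ is continuous there. Applying Green's second identity to the pair $h, v$ and using $\nabla^2 h = 0$, $\nabla^2 v = 1$ together with $v = 0$ on $\partial D$ then yields
\[ \int_D h (x) \, \D x = \int_{\partial D} h (y) \, \frac{\partial v}{\partial n} (y) \, \D S_y \, . \]

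Combining this identity with the rewritten hypothesis gives $\int_{\partial D} h \, ( \partial v / \partial n - \lambda ) \, \D S = 0$ for every harmonic $h \in C^1 (\overline D)$. The next step is to strip away the test function. Solvability of the Dirichlet problem on the $C^2$-domain $D$ shows that the boundary traces of harmonic functions in $C^1 (\overline D)$ form a dense subset of $C (\partial D)$ (the harmonic extension of smooth boundary data is admissible, and smooth data are dense); since $\partial v / \partial n - \lambda$ is continuous on $\partial D$ and orthogonal to this dense set, it must vanish identically. Hence
\[ \frac{\partial v}{\partial n} = \lambda = \frac{|D|}{|\partial D|} \quad \mbox{on } \partial D \, . \]
As a consistency check, integrating $\nabla^2 v = 1$ over $D$ and applying the divergence theorem reproduces exactly this value of the constant. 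Thus $v$ solves the overdetermined problem $\nabla^2 v = 1$ in $D$, with $v = 0$ and $\partial v / \partial n = \mathrm{const}$ on $\partial D$.

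At this point Serrin's theorem applies: a bounded domain admitting a solution of precisely this overdetermined problem must be a ball, whence $D$ is a ball and the proof is complete. I expect the main obstacle to lie in this final, nonelementary input rather than in the preceding reduction: the reduction is a short application of Green's identity and of Dirichlet solvability, but the conclusion that the overdetermined system characterizes balls rests on the method of moving planes (or on Weinberger's integral-identity argument), which is the substantive part of the proof. A secondary technical point is to confirm that the regularity afforded by a $C^2$-domain suffices both for the torsion function to possess a continuous normal derivative and for the density of harmonic traces, so that the orthogonality relation can legitimately be reduced to the pointwise identity $\partial v / \partial n \equiv \lambda$.
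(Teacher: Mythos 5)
Your proposal is correct and follows essentially the same route as the paper's (Bennett's) proof: reduce \eqref{3.1} via Green's identity and the torsion function to the overdetermined problem \eqref{3.2} with $c = |D|/|\partial D|$, then invoke the Serrin--Weinberger theorem (Theorem~3.3). The only (harmless) variation is at the step stripping the test function: the paper substitutes the harmonic extension of $\partial u/\partial n + |D|/|\partial D|$ directly to get $\int_{\partial D}(\,\cdot\,)^2 \, \D S = 0$, whereas you use density of harmonic traces in $C(\partial D)$ --- which in fact handles more carefully the regularity question of whether that particular extension lies in $C^1(\overline D)$.
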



Equality \eqref{3.1} closely resembles the relation between the mean values over a
ball and its boundary; cf. \eqref{Mbs}. The theorem cited in \cite{NV} was published
by Kosmodem'yanskii~\cite{Kos} five years earlier than Theorem~3.2 was proved by
Bennett \cite{Be} in 1986. However, Kos\-mo\-dem'yanskii's result has two drawbacks:
it is essentially two-dimensional and the super\-fluous assumption that $D$ is
convex is imposed. The latter condition arises in an auxiliary assertion similar to
the following theorem of Serrin \cite{S} and Weinberger~\cite{W}, on which Bennett's
proof is based.


\begin{theorem}[Serrin, Weinberger]
Let $D \subset \RR^m$ be a bounded $C^2$-domain and let
\begin{equation}
\nabla^2 u = - 1 \ in \ D \quad and \quad u = 0 , \ \ \partial u / \partial n = - c
\ on \ \partial D , \label{3.2}
\end{equation}
for some $u \in C^2 (D) \cap C^1 (\overline D)$ and constant $c$. Then $D$ is a
ball and $u = (b^2 - r^2) / (2 m)$, where $b$ and $r$ denote the radius of the ball
and the distance from its centre, respectively.
\end{theorem}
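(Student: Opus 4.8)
The plan is to prove the Serrin--Weinberger theorem, which asserts that the overdetermined boundary value problem \eqref{3.2} forces $D$ to be a ball. I would follow Weinberger's elegant argument based on a maximum principle for a cleverly chosen auxiliary function, since it is shorter than Serrin's moving-plane method and generalizes readily to all dimensions $m \geq 2$.

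Let me sketch my proof.

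\medskip

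\noindent \emph{Step 1 (A Rellich-type integral identity).} First I would integrate the equation $\nabla^2 u = -1$ against a suitable multiplier and use the divergence theorem to extract a relation between the volume $|D|$, the surface area $|\partial D|$, and the constant $c$. Multiplying $\nabla^2 u = -1$ by $u$ and integrating gives
\[
\int_D |\nabla u|^2 \, \D x = \int_D u \, \D x + c \int_{\partial D} u \, \D S = \int_D u \, \D x ,
\]
since $u = 0$ on $\partial D$. Separately, integrating $\nabla^2 u = -1$ over $D$ yields $|D| = c \, |\partial D|$, which already pins down $c$ in terms of the geometry.

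\medskip

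\noindent \emph{Step 2 (The $P$-function and its subharmonicity).} The heart of the argument is to introduce the auxiliary function
\[
P = |\nabla u|^2 + \frac{2}{m} \, u .
\]
A direct computation, using $\nabla^2 u = -1$ and the pointwise inequality between the full Hessian norm and the square of its trace (namely $\sum_{i,j} (\partial_{ij} u)^2 \geq (\nabla^2 u)^2 / m = 1/m$), shows that $P$ is subharmonic: $\nabla^2 P \geq 0$ in $D$. Hence $P$ attains its maximum on $\partial D$, where $u = 0$ and $|\nabla u|^2 = c^2$, so $P = c^2$ there.

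\medskip

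\noindent \emph{Step 3 (Forcing equality via the integral identity).} The maximum principle gives $P \leq c^2$ throughout $D$, i.e. $|\nabla u|^2 \leq c^2 - \tfrac{2}{m} u$. The plan is now to integrate this inequality over $D$ and compare with the identities from Step~1; a short computation shows that the integrated inequality is in fact an equality. This forces $\nabla^2 P \equiv 0$, so that the Cauchy--Schwarz-type estimate in Step~2 holds with equality everywhere. Equality there means the Hessian is a scalar multiple of the identity, $\partial_{ij} u = -\tfrac{1}{m} \delta_{ij}$ at every point.

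\medskip

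\noindent \emph{Step 4 (Integrating the Hessian condition).} Integrating $\partial_{ij} u = -\delta_{ij}/m$ twice shows that $u$ must be a quadratic of the form $u = -|x - x_0|^2 / (2m) + \text{const}$ for some centre $x_0$. The boundary condition $u = 0$ then forces $\partial D$ to be a level set $|x - x_0| = b$, i.e. a sphere, so $D = B_b(x_0)$; and the stated form $u = (b^2 - r^2)/(2m)$ follows at once.

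\medskip

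I expect Step~2 and Step~3 to be the crux. Verifying subharmonicity of $P$ is a routine but delicate computation that relies essentially on the algebraic inequality $\|\mathrm{Hess}\, u\|^2 \geq (\nabla^2 u)^2/m$, which is just Cauchy--Schwarz applied to the eigenvalues of the Hessian. The genuinely subtle point is Step~3: one must argue that the maximum principle inequality, combined with the integral identities, cannot be strict anywhere. The cleanest route is to observe that if $P < c^2$ on a set of positive measure, the integrated version of the inequality would be strict, contradicting the equality forced by Step~1; this rigidity is what upgrades the one-sided bound into the pointwise Hessian identity. Care is also needed at the boundary regularity level, but the $C^2(D) \cap C^1(\overline{D})$ hypothesis together with the $C^2$-smoothness of $\partial D$ is enough to justify all the integrations by parts.
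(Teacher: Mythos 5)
You are reconstructing Weinberger's proof, which is a reasonable target: the survey itself states this theorem without proof, citing Serrin's moving-plane argument \cite{S} and Weinberger's note \cite{W}, and elsewhere alludes to the latter via ``properties of $|\nabla u|^2 + 2u/m$''. Your Steps 2 and 4 are correct as sketched: with $\nabla^2 u = -1$ one gets $\nabla^2 P = 2\sum_{i,j}(\partial_{ij}u)^2 - 2/m \geq 0$ by Cauchy--Schwarz applied to the Hessian (the third derivatives involved are harmless, since $\nabla^2 u = -1$ makes $u$ real-analytic in $D$); on $\partial D$ the tangential gradient of $u$ vanishes because $u = 0$ there, so $|\nabla u| = c$ and $P = c^2$; and equality in the Cauchy--Schwarz step integrates to $u = -|x - x_0|^2/(2m) + \mathrm{const}$, whence $D$ is a ball of radius $b = mc$ and $u = (b^2 - r^2)/(2m)$.

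The gap is in Step 3, precisely at the point you flag as subtle. Your rigidity argument needs $\int_D P \, \D x = c^2 |D|$, but the two identities you actually derive in Step 1, namely $\int_D |\nabla u|^2 \, \D x = \int_D u \, \D x$ and $|D| = c\,|\partial D|$, give only $\int_D P \, \D x = \frac{m+2}{m}\int_D u \, \D x$ and contain no relation between $\int_D u$ and $c^2|D|$; integrating the maximum-principle bound $P \leq c^2$ then yields $\frac{m+2}{m}\int_D u \leq c^2|D|$, which is the same one-sided inequality you started from and produces no contradiction. So the claim that strictness of $P < c^2$ on a set of positive measure would ``contradict the equality forced by Step 1'' is unsupported: no such equality has been forced. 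The missing ingredient is the actual Rellich--Pohozaev identity you announce but never write, coming from the multiplier $x \cdot \nabla u$: multiplying $\nabla^2 u = -1$ by $x \cdot \nabla u$, integrating by parts, and using $\nabla u = -c\,n$ on $\partial D$ together with $\int_{\partial D} x \cdot n \, \D S = m\,|D|$ gives
\begin{equation*}
c^2 |D| = \frac{m+2}{m} \int_D u \, \D x \, ,
\end{equation*}
and only with this third identity does $\int_D P \, \D x = c^2|D|$ follow, forcing $P \equiv c^2$, hence $\nabla^2 P \equiv 0$ and the Hessian rigidity of Step 4. With that computation inserted your argument is complete and coincides with Weinberger's; as written, however, the one genuinely non-routine ingredient of the proof is asserted rather than proved.
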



Now we are in a position to present Bennett's proof of Theorem 3.2.


\begin{proof}[Proof of Theorem 3.2.]
It is clear that there exists $u \in C^2 (D) \cap C^1 (\overline D)$ satisfying the
first and second relations \eqref{3.2}. Then the first relation yields
\[ \int_{D} h \, \D x = - \int_{D} h \, \nabla^2 u \, \D x = - \int_{D} u \, \nabla^2 
h \, \D x + \int_{\partial D} \left[ u \frac{\partial h}{\partial n} - h
\frac{\partial u}{\partial n} \right] \D S_x = - \int_{\partial D}  h \frac{\partial
u}{\partial n} \, \D S_x \, ,
\]
where the last equality follows by harmonicity of $h$ and the second relation
\eqref{3.2}. Combining this and \eqref{3.1}, we obtain
\[ \int_{\partial D} h \left[ \frac{\partial u}{\partial n} + \frac{|D|}{|\partial
D|} \right] \D S_x = 0 \, .
\]
Moreover, there exists $h$ harmonic in $D$ and satisfying the condition
\[ h = \frac{\partial u}{\partial n} + \frac{|D|}{|\partial D|} \ \ \mbox{on} \ 
\partial D \, .
\]
Substituting it into the last integral, we obtain that the third condition
\eqref{3.2} is also valid for $u$ with $c = |D| / |\partial D|$. Then $D$ is a ball
by Theorem~3.3.
\end{proof}


Payne and Schaefer \cite{PS} discovered an alternative proof of a slightly modified
version of Theorem~3.2. Namely, instead of \eqref{3.2} it is required that there
exists a constant $c$ such that
\begin{equation}
\int_{D} h (x) \, \D x = c \int_{\partial D} h (x) \, \D S_x \label{3.3}
\end{equation}
holds for every $h$ harmonic in $D$. The proof is based on considerations of
Weinberger's note \cite{W} involving properties of $|\nabla u|^2 + 2 u / m$, where
$u$ solves \eqref{3.2}.

One more application of the integral equality \eqref{3.3} was given by Didenko and
Emami\-zadeh \cite{DE}, whose approach to characterization of balls is as follows.
Let $V : \RR^m \to \RR^m$ be a given vector field with $C^2$-components. Then the
problem
\begin{equation}
\nabla^2 h = 0 \ \ \mbox{in} \ D , \quad h = - (V \cdot n) \, \frac{\partial
h}{\partial n} \ \ \mbox{on} \ \partial D \label{3.4}
\end{equation}
has a solution because $h$ is the so-called domain derivative in the direction of
$V$ of a unique solution to Saint-Venant's problem in $D$; the latter includes the
first two relations \eqref{3.2}. A detailed treatment of the concept of
differentiation with respect to the domain is given in \cite{Si}.

Let $\mathcal V$ denote the set of harmonic functions in $D$, each being a solution
of problem \eqref{3.4} for some possible vector field $V$. Then the result
established in \cite{DE} is as follows.


\begin{theorem} 
Let $D \subset \RR^m$, $m \geq 2$, be a bounded $C^2$-domain. Then $D$ is a ball if
and only if equality \eqref{3.3} is valid for every function $h \in C^2 (D) \cap C^1
(\overline D)$ belonging to $\mathcal V$. Moreover, $- m \, [\partial h / \partial
n]_{\partial D}$ is constant and equals to ball's radius.
\end{theorem}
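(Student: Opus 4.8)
The plan is to establish both implications, the forward one being routine and the converse carrying the whole weight. If $D=B_R(x_0)$ is a ball, then the mean value identity \eqref{Mbs} of Corollary~2.1 gives $\int_D g\,\D x=(R/m)\int_{\partial D}g\,\D S$ for \emph{every} function $g$ harmonic in $D$; hence \eqref{3.3} holds with the single constant $c=R/m$ for all harmonic $g$, in particular for all $h\in\mathcal V$. For the converse I would fix once and for all the Saint--Venant solution $u$ (so $\nabla^2u=-1$ in $D$, $u=0$ on $\partial D$) and write $q:=\partial u/\partial n$ for its Neumann datum on $\partial D$. By the very definition of $\mathcal V$, each $h\in\mathcal V$ is the domain derivative of $u$ in some direction $V$; consequently $h$ is harmonic in $D$ and, on $\partial D$, satisfies the shape-derivative boundary condition $h=-(V\cdot n)\,q$ (differentiating $u_t=0$ on the moving boundary and using that the tangential gradient of $u$ vanishes there).

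The key step is to convert the integral identity \eqref{3.3} into a pointwise condition on $q$. First I would apply Green's second identity to the pair $u,h$: since $\nabla^2u=-1$, $\nabla^2h=0$ and $u=0$ on $\partial D$, it yields $\int_D h\,\D x=-\int_{\partial D}h\,q\,\D S$. Inserting $h=-(V\cdot n)\,q$ gives the two evaluations
\[ \int_D h\,\D x=\int_{\partial D}(V\cdot n)\,q^2\,\D S,\qquad \int_{\partial D}h\,\D S=-\int_{\partial D}(V\cdot n)\,q\,\D S. \]
Substituting these into \eqref{3.3}, with one and the same constant $c$, I obtain
\[ \int_{\partial D}(V\cdot n)\,q\,(q+c)\,\D S=0 \]
for every vector field $V$ with $C^2$-components.

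It then remains to exploit the arbitrariness of $V$. Because $D$ is a $C^2$-domain, the outward normal extends to a field $N$ near $\partial D$ (e.g. through the signed distance), and choosing $V=\phi\,N$ realizes the normal trace $V\cdot n=\phi$ for a class of $\phi$ dense in $C^0(\partial D)$; the continuous factor $q(q+c)$ must therefore vanish identically on $\partial D$. By the Hopf boundary-point lemma applied to $u>0$ one has $q<0$ everywhere, so $q\ne0$ and hence $q\equiv-c$, i.e. the overdetermined condition $\partial u/\partial n=-c$ of \eqref{3.2} holds. Theorem~3.3 then forces $D$ to be a ball with $u=(b^2-\rho^2)/(2m)$, and comparing normal derivatives gives $-m\,\partial u/\partial n=mc=b$, the radius, which is the asserted ``moreover''. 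The main obstacle I anticipate is precisely this last transfer step: justifying rigorously that the normal traces $V\cdot n$ range over a class rich enough to force $q(q+c)\equiv0$ (reconciling the required $C^2$-regularity of $V$ with the merely $C^1$ regularity of $n$ available for a $C^2$-domain), together with confirming that the constant $c$ in \eqref{3.3} is genuinely independent of $V$, without which the reduction to a pointwise identity would fail.
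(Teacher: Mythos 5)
Your proposal is correct, and it is in fact the right reconstruction of the argument: the survey states this theorem without giving any proof (it only cites the Didenko--Emamizadeh paper \cite{DE}), and your route --- shape-derivative boundary condition, Green's second identity to get $\int_D h \, \D x = -\int_{\partial D} h \, q \, \D S$, reduction of \eqref{3.3} to $\int_{\partial D} (V \cdot n)\, q (q + c) \, \D S = 0$, Hopf's lemma to exclude $q = 0$, and then the Serrin--Weinberger Theorem~3.3 --- is precisely the mechanism of \cite{DE}, and is consistent with the paper's own toolkit, since Theorem~3.3 is exactly what the survey uses in Bennett's proof of Theorem~3.2. Two remarks. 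First, you silently (and correctly) repaired a typo in the survey: formula \eqref{3.4} there reads $h = -(V \cdot n)\, \partial h / \partial n$, whereas the genuine shape-derivative condition, which your Green-identity computation requires, is $h = -(V \cdot n)\, \partial u / \partial n$ with $u$ the Saint-Venant solution; likewise the ``moreover'' clause should read $-m\, [\partial u / \partial n]_{\partial D}$, and your conclusion $-m q = mc = b$ is the intended one. Second, the obstacle you flag at the end is easily dissolved and needs no density argument or normal extension: taking $V = \psi \, e_i$ with $\psi \in C^\infty (\RR^m)$ arbitrary and $e_i$ a coordinate vector gives $V \cdot n = \psi \, n_i$, whence the continuous function $n_i \, q (q+c)$ vanishes identically on $\partial D$ for each $i$; multiplying by $n_i$ and summing over $i$, the identity $|n| = 1$ yields $q (q + c) \equiv 0$ outright, so the $C^1$ regularity of $n$ on a $C^2$-domain is never an issue. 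With that substitution your argument is complete, including the forward direction via \eqref{Mbs} with $c = R/m$ and the independence of $c$ from $V$, which is built into the formulation of \eqref{3.3}.
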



The question whether $\mathcal V$ is a proper subset of the whole set of functions
harmonic in~$D$, in which case Theorem~3.4 improves the result of Payne and
Schaefer, was not considered in \cite{DE}.

\subsection{Panharmonic functions}

A characterization of balls via panharmonic functions was recently obtained by the
author \cite{Ku,Ku1}. Before giving the precise formulation of the result, we define
a dilated copy of a bounded domain $D$ as follows: $D_r = D \cup \left[ \cup_{x \in
\partial D} B_r (x) \right]$. Thus, the distance from $\partial D_r$ to $D$ is
equal to $r$. The following general assertion was proved in~\cite{Ku1}.

\begin{theorem}
Let $D \subset \RR^m$, $m \geq 2$, be a bounded domain, whose complement is
connected, and let $r$ be a positive number such that $|B_r| \leq |D|$. Suppose that
there exists a point $x_0 \in D$ such that for some $\mu > 0$ the mean value
identity $u (x_0) \, a^\bullet (\mu r) = M^\bullet (u, D)$ holds for every positive
function $u$ satisfying equation \eqref{Hh} in $D_r$. If also $|D| = |B_r|$ provided
$B_r (x_0) \setminus \overline D \neq \emptyset$, then $D = B_r (x_0)$.
\end{theorem}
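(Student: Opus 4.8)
The plan is to compare $D$ with the ball $B = B_r(x_0)$ by probing the hypothesis with a single, carefully chosen positive panharmonic function, namely the regular radial solution
\[ g(x) = a^\circ(\mu\,|x - x_0|) , \]
where $a^\circ$ is the coefficient defined in \eqref{cibu}. Since $r \mapsto a^\circ(\mu r)$ solves the Cauchy problem \eqref{g5}, the function $g$ satisfies $\nabla^2 g - \mu^2 g = 0$ throughout $\RR^m$, and in particular on $D_r$; moreover $g > 0$ everywhere (because $I_\nu > 0$), $g(x_0) = a^\circ(0) = 1$, and $r \mapsto a^\circ(\mu r)$ is strictly increasing, so that $g(x) \gtrless a^\circ(\mu r)$ according as $|x - x_0| \gtrless r$. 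Thus $g$ is an admissible test function for the hypothesis.

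First I would record two evaluations. Applying the hypothesis to $u = g$ gives $\int_D g\,\D x = |D|\,a^\bullet(\mu r)$, while \eqref{Mpb} applied on $B$ gives $\int_{B} g\,\D x = |B_r|\,a^\bullet(\mu r)$, since $g(x_0) = 1$. Subtracting these and splitting each integral over $D \cap B$, $D \setminus B$ and $B \setminus D$ yields
\[ \int_{D\setminus B} g\,\D x - \int_{B\setminus D} g\,\D x = (|D| - |B_r|)\,a^\bullet(\mu r) . \]
Now I would insert the monotonicity bounds $g \geq a^\circ(\mu r)$ on $D \setminus B$ (there $|x - x_0| \geq r$) and $g \leq a^\circ(\mu r)$ on $B \setminus D$ (there $|x - x_0| < r$). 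Using $|D\setminus B| - |B\setminus D| = |D| - |B_r|$, the left-hand side is at least $a^\circ(\mu r)\,(|D| - |B_r|)$, whence $(|D| - |B_r|)\,(a^\bullet(\mu r) - a^\circ(\mu r)) \geq 0$. Since $a^\bullet(\mu r) < a^\circ(\mu r)$ for $\mu r > 0$ (the ratio $a^\bullet/a^\circ < 1$ noted just after \eqref{Mpbs}), this forces $|D| \leq |B_r|$, and together with the standing assumption $|B_r| \leq |D|$ it gives $|D| = |B_r|$.

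With $|D| = |B_r|$ the displayed identity reduces to $\int_{D\setminus B} g = \int_{B\setminus D} g$, while $|D\setminus B| = |B\setminus D|$. Chaining the two bounds,
\[ a^\circ(\mu r)\,|D\setminus B| \leq \int_{D\setminus B} g = \int_{B\setminus D} g \leq a^\circ(\mu r)\,|B\setminus D| = a^\circ(\mu r)\,|D\setminus B| , \]
so every inequality is an equality. But on $B \setminus D \subset B_r(x_0)$ one has $g < a^\circ(\mu r)$ strictly; hence $\int_{B\setminus D} g = a^\circ(\mu r)\,|B\setminus D|$ is possible only if $|B\setminus D| = 0$, and then $|D\setminus B| = 0$ as well. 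Therefore $D$ and $B_r(x_0)$ coincide up to a set of measure zero.

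The remaining and, to my mind, most delicate step is to upgrade $|D \,\triangle\, B_r(x_0)| = 0$ to the genuine set equality $D = B_r(x_0)$; this is where the topological hypotheses must enter. Since $D \setminus \overline{B_r(x_0)}$ is open and null it is empty, so $D \subseteq \overline{B_r(x_0)}$ and, $D$ being open, $D \subseteq B_r(x_0)$; since $B_r(x_0) \setminus \overline D$ is likewise open and null, $\overline D = \overline{B_r(x_0)}$. Connectedness of the complement $\RR^m \setminus D$ is then what excludes $D$ differing from the ball by an interior exceptional set (for instance a removed interior point or cavity, whose presence would disconnect the complement). I expect the careful bookkeeping of this final identification, rather than the analytic core above, to be the place where all the hypotheses are genuinely needed. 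An alternative route, closer to \cite{Ku1}, would replace the single function $g$ by the family of fundamental solutions $\Phi_\mu(\,\cdot - z)$ with poles $z \notin \overline{D_r}$ and argue by analytic continuation of the associated panharmonic potentials across $\RR^m \setminus \overline D$, again exploiting the connectedness of the complement.
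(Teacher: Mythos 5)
Your analytic core coincides with the paper's own proof: the paper likewise tests the hypothesis with the single radial solution $U(x)=a^\circ(\mu|x-x_0|)$ (its function $U$, with $x_0$ moved to the origin), writes the same two identities $\int_D U\,\D y=|D|\,a^\bullet(\mu r)$ and $\int_{B_r(x_0)}U\,\D y=|B_r|\,a^\bullet(\mu r)$ (the paper's \eqref{1} and \eqref{2}, the latter from \eqref{Mpb}), and then compares $U$ pointwise with its value $a^\circ(\mu r)$ on the sphere $|x-x_0|=r$, using the strict inequality $a^\circ(\mu r)>a^\bullet(\mu r)$ (the paper's \eqref{aU}). Where you differ is only the bookkeeping, and your version is arguably tidier: the paper argues by contradiction in two separate cases --- $B_r(x_0)\setminus\overline D\neq\emptyset$, where the \emph{assumed} equality $|D|=|B_r|$ gives $|G_i|=|G_e|$, and $B_r(x_0)\subset D$, where monotonicity of $U$ forces $a^\bullet(\mu r)>a^\circ(\mu r)$, contradicting \eqref{aU} --- whereas your single inequality $(|D|-|B_r|)\,(a^\bullet(\mu r)-a^\circ(\mu r))\geq 0$ yields $|D|\leq|B_r|$ outright, so the equal-volume condition in the second case is \emph{derived} rather than assumed, and $|D\,\triangle\,B_r(x_0)|=0$ follows uniformly. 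Up to and including $D\subseteq B_r(x_0)$ and $\overline D=\overline{B_r(x_0)}$, your argument is correct as written.

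Your instinct that the final identification is the delicate point is exactly right, and it is instructive to see how the paper handles it: it does not. The paper's proof opens with ``It is clear that either $B_r(0)\subset D$ or $B_r(0)\setminus\overline D\neq\emptyset$,'' which is precisely the assertion that $B_r(x_0)\subseteq\overline D$ forces $B_r(x_0)\subseteq D$ --- i.e., the step you isolate is assumed without proof. Connectedness of the complement does dispose of punctures and cavities, since a nonempty relatively closed null set $B_r(x_0)\setminus D$ compactly contained in the ball would create a bounded component of $\RR^m\setminus D$; but it does not dispose of such a set reaching the sphere. For instance, if $D=B_r(x_0)\setminus L$ with $L$ a closed radial slit ending on $\partial B_r(x_0)$, then $D$ is a bounded domain, $\RR^m\setminus D$ is connected, $D_r$ is an open neighbourhood of $\overline{B_r(x_0)}$, and the mean value hypothesis holds for every positive panharmonic $u$ on $D_r$ because the integrals do not see the null set $L$ --- yet $D\neq B_r(x_0)$. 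So the residual case you flag cannot be closed from the stated hypotheses alone (some regularity such as $D=\mathrm{int}\,\overline D$ is implicitly needed); this is a genuine gap, but one you share with, and in fact diagnose more honestly than, the published proof. Your alternative suggestion via fundamental solutions with exterior poles is not needed for the argument the survey presents: its proof of Theorem 3.5 is exactly your first route.
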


Unfortunately, the  assumption that the complement of $D$ is connected is missed in
the formulation of this result in \cite{Ku1} and in Theorem~1, \cite{Ku}.

Prior to proving this theorem, let us consider some properties of the function
\begin{equation*}
U (x) = a^\circ (\mu |x|) \, , \quad x \in \RR^m , \label{U}
\end{equation*}
The second formula \eqref{diff} shows that this spherically symmetric function
monotonically increases from unity to infinity as $|x|$ goes from zero to infinity.
A consequence of representation \eqref{PI}, which is easy to differentiate, is that
$U$ solves equation \eqref{Hh} in $\RR^m$. Since both formulae \eqref{cibu} are
similar, Poisson's integral allows us to compare these functions, and the inequality
\begin{equation} 
[ U (x) ]_{|x| = r} > a^\bullet (\mu r) \label{aU} 
\end{equation} 
immediately follows.

\begin{proof}[Proof of Theorem 3.5.] 
Without loss of generality, we suppose that the domain $D$ is located so that $x_0$
coincides with the origin. Let us show that the assumption that $D \neq B_r (0)$
leads to a contradiction.

It is clear that either $B_r (0) \subset D$ or $B_r (0) \setminus \overline D \neq
\emptyset$ (the equality $|B_r| = |D|$ is assumed in the latter case), and we treat
these two cases separately. Let us consider the second case first, for which
purpose we introduce the bounded open sets
\[ G_i = D \setminus \overline{B_r (0)} \quad \mbox{and} \quad G_e = B_r (0)
\setminus \overline D \, ,
\]
whose nonzero volumes are equal in view of the assumptions about $D$ and $r$. The
volume mean identity for $U$ over $D$ can be written as follows:
\begin{equation}
|D| \, a^\bullet (\mu r) = \int_D U (y) \, \D y \, ; \label{1}
\end{equation}
here the condition $U (0) = 1$ is taken into account. Since formula \eqref{Mpb} is
valid for $U$ over $B_r (0)$ ($R=r$ in \eqref{Mpb} in this case), we write it in the
same way:
\begin{equation}
|B_r| \, a^\bullet (\mu r) = \int_{B_r (0)} U (y) \, \D y \, . \label{2}
\end{equation}
Subtracting \eqref{2} from \eqref{1}, we obtain
\begin{equation*}
0 = \int_{G_i} U (y) \, \D y - \int_{G_e} U (y) \, \D y > 0 \, .
\end{equation*}
Indeed, the difference is positive since $U (y)$ (positive and monotonically
increasing with~$|y|$) is greater than $[U (y)]_{|y| = r}$ in $G_i$ and less than
$[U (y)]_{|y| = r}$ in $G_e$, whereas $|G_i| = |G_e|$. This contradiction proves
the result in this case.

In the case when $B_r (0) \subset D$, a contradiction must be deduced when $B_r (0)
\neq D$, that is,  $|G_i| = |D| - |B_r| > 0$. Now, subtracting \eqref{2} from
\eqref{1}, we obtain
\begin{equation*}
( |D| - |B_r| ) \, a^\bullet (\mu r) = \int_{G_i} U (y) \, \D y > |G_i| \, [ U (y)
]_{|y| = r} \, , \label{3}
\end{equation*}
where the last inequality is again a consequence of positivity of $U (y)$ and its
monotonicity. This yields that $a^\bullet (\mu r) > [ U (y) ]_{|y| = r}$, which
contradicts \eqref{aU}. The proof is complete.
\end{proof} 

Let us comment on Theorem 3.5. First, the domain $D$ is supposed to be bounded
because it is easy to construct an unbounded domain of finite volume in which $U$ is
not integrable. Thus, the boundedness of $D$ allows us to avoid formulating rather
complicated restrictions on the domain.

Second, one obtains Laplace's equation from \eqref{Hh} in the limit $\mu \to 0$, and
the assumption about $r$ becomes superfluous in this case. Therefore, Theorem 3.5
turns into an improved version of Kuran's because only positive harmonic functions
are involved; see also \cite{AG}.

Furthermore, the integral $\int_D u (y) \, \D y$ can be replaced by the flux
$\int_{\partial D} \partial u / \partial n_y \, \D S_y$ in the formulation of
Theorem~3.5 provided $\partial D$ is sufficiently smooth; here $n$ is the exterior
unit normal. Indeed, we have
\[ \int_D u (y) \, \D y = \mu^{-2} \int_D \nabla^2 u \, (y) \, \D y = \mu^{-2}
\int_{\partial D} \partial u / \partial n_y \, \D S_y \, .
\]
These relations are used in \cite{Ku1}; see the proof of Theorem 9, which
characterizes solutions of equation \eqref{Hh} in terms of the mean flux through
spheres. This suggests the following.

\begin{conjecture}
Let $D \subset \RR^m$, $m \geq 2$, be a bounded domain with sufficiently smooth
boundary, and let $r > 0$ be such that $|B_r| = |D|$. If there exists $x_0 \in D$
such that for some $\mu > 0$ the equality
\begin{equation*}
\frac{2 \mu}{m} \, a^\bullet (\mu r) \, u (x_0) = \frac{1}{|\partial D|}
\int_{\partial D} \frac{\partial u}{\partial n_y} \, \D S_y \label{c_1}
\end{equation*}
holds for every panharmonic $u \in C^1 (\overline D)$, then $D = B_r (x_0)$.
\end{conjecture}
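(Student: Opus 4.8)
\textbf{Reduction to a volume identity.} The natural starting point is the relation highlighted just before the conjecture. Since $u$ is panharmonic, the divergence theorem and \eqref{Hh} give $\int_{\partial D} \partial u / \partial n_y \, \D S_y = \int_D \nabla^2 u \, \D y = \mu^2 \int_D u \, \D y$, so the assumed boundary identity is equivalent to
\begin{equation*}
\int_D u (y) \, \D y = K \, u (x_0) \qquad \text{for every panharmonic } u \in C^1 (\overline D) ,
\end{equation*}
where $K$ is proportional to $|\partial D|$. A direct computation using \eqref{Mps} and the second relation in \eqref{diff} shows that the outward normal derivative of $U (y) = a^\circ (\mu |y - x_0|)$ is constant on $\partial B_r (x_0)$; the coefficient in the conjecture is exactly the value making the two sides of the hypothesis agree there, so that $K = \tfrac{r}{m} |\partial D| \, a^\bullet (\mu r)$ and, in particular, $K = |B_r| \, a^\bullet (\mu r)$ when $D = B_r (x_0)$. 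Thus the hypothesis says that $D$ is a one-node quadrature domain for panharmonic functions, with node $x_0$ and weight $K$, and the task is to single out the ball among such domains under $|B_r| = |D|$.

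\textbf{First attempt, following Theorem 3.5.} Testing the displayed identity with $U$ itself (for which $U (x_0) = a^\circ (0) = 1$) gives $\int_D U = K$, while \eqref{Mpb} applied to $U$ over $B_r (x_0)$ gives $\int_{B_r (x_0)} U = |B_r| \, a^\bullet (\mu r)$. Writing $G_i = D \setminus \overline{B_r (x_0)}$ and $G_e = B_r (x_0) \setminus \overline D$, one has $|G_i| = |G_e|$ because $|D| = |B_r|$, and subtracting the two integrals isolates
\begin{equation*}
\int_{G_i} U (y) \, \D y - \int_{G_e} U (y) \, \D y = K - |B_r| \, a^\bullet (\mu r) .
\end{equation*}
Exactly as in the proof of Theorem 3.5, the left-hand side is strictly positive whenever $G_i \cup G_e$ has positive measure, because $U$ exceeds its boundary value $[\,U\,]_{|y - x_0| = r}$ on $G_i$ and is smaller than it on $G_e$.

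\textbf{The main obstacle.} In Theorem 3.5 the weight was exactly $|D| \, a^\bullet (\mu r)$, so the right-hand side above vanished and monotonicity of $U$ forced $|G_i| = |G_e| = 0$. Here, by contrast, $K = \tfrac{r}{m} |\partial D| \, a^\bullet (\mu r)$ carries the factor $|\partial D|$, which the constraint $|B_r| = |D|$ does \emph{not} control: the isoperimetric inequality yields only $|\partial D| \geq |\partial B_r|$, that is $K \geq |B_r| \, a^\bullet (\mu r)$, with equality iff $D = B_r (x_0)$. Hence both sides of the displayed difference are nonnegative and their equality produces \emph{no} contradiction from the single test function $U$. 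This is the essential new difficulty of the flux formulation: the rearrangement bound for $\int_D U$ and the isoperimetric bound for $K$ point the same way, and I expect reconciling them to be the crux of the problem.

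\textbf{A way to proceed.} The remedy I would pursue is to exploit the whole family of regular translates $U_z (y) = a^\circ (\mu |y - z|)$, each entire, positive and panharmonic, for which the identity reads $\int_D U_z = K \, a^\circ (\mu |z - x_0|)$ for \emph{every} $z \in \RR^m$. Inserting the representation $a^\circ (\mu |x|) = |S^{m-1}|^{-1} \int_{S^{m-1}} \exp (\mu \, \hat\xi \cdot x) \, \D \hat\xi$ (which follows from \eqref{PI}, the spherical mean of $\exp (\mu \, \hat\xi \cdot x)$ being radial, panharmonic and equal to $1$ at the origin) and noting that a continuous $g$ on $S^{m-1}$ with $\int_{S^{m-1}} \exp (- \mu \, \hat\xi \cdot z) \, g (\hat\xi) \, \D \hat\xi = 0$ for all $z$ must vanish identically (expanding in powers of $z$ annihilates every moment of $g$), one upgrades the single comparison to the overdetermined family
\begin{equation*}
\int_{G_i} \exp (\mu \, \hat\xi \cdot y) \, \D y - \int_{G_e} \exp (\mu \, \hat\xi \cdot y) \, \D y = K - |B_r| \, a^\bullet (\mu r) \qquad \text{for all } \hat\xi \in S^{m-1} .
\end{equation*}
The remaining — and hardest — step is to show that this constancy of the exponential-moment difference over the whole sphere $|\xi| = \mu$, together with $|G_i| = |G_e|$ and the radial separation of $G_i$ (outside $\partial B_r$) from $G_e$ (inside it), forces $G_i$ and $G_e$ to be null. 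A parallel route worth exploring is a Serrin--Weinberger symmetry argument for the torsion function of $\nabla^2 - \mu^2$ (solving $\nabla^2 w - \mu^2 w = -1$ in $D$ and $w = 0$ on $\partial D$), in the spirit of Bennett's proof of Theorem 3.2; there the obstacles are the interior point-evaluation $u (x_0)$, which must be represented through the fundamental solution, and the coupling of the Cauchy data of $u$ on $\partial D$.
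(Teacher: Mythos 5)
The statement you were asked to prove is labelled a \emph{Conjecture} in the paper and is left open there: the author offers only the motivating observation $\int_D u \, \D y = \mu^{-2} \int_{\partial D} \partial u / \partial n_y \, \D S_y$ and no proof, so there is no argument of the paper to compare against, and your proposal must stand on its own — which, as you yourself concede, it does not: it is a program with the decisive step missing. What you do have is largely correct. The reduction of the flux hypothesis to the volume quadrature identity $\int_D u = K \, u (x_0)$ is sound, as is the computation $\frac{\D}{\D r} \, a^\circ (\mu r) = \frac{\mu^2 r}{m} \, a^\bullet (\mu r)$ behind your calibration $K = \frac{r}{m} |\partial D| \, a^\bullet (\mu r)$. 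Note, however, that this corresponds to the coefficient $\frac{\mu^2 r}{m}$, not the paper's literal $\frac{2 \mu}{m}$: with the printed constant the ball itself satisfies the hypothesis only when $\mu r = 2$, so you have silently corrected what is presumably a misprint, and you should flag this rather than assert that ``the coefficient in the conjecture'' matches. Your diagnosis of why the proof of Theorem 3.5 does not transfer is the most valuable part of the proposal: there the weight $|D| \, a^\bullet (\mu r)$ made the single test function $U$ produce a sign contradiction, whereas here the isoperimetric inequality gives $K - |B_r| \, a^\bullet (\mu r) \geq 0$, pointing the \emph{same} way as the rearrangement inequality $\int_{G_i} U - \int_{G_e} U \geq 0$, so no one-function comparison can close the argument. (A simplification: since the hypothesis is assumed for \emph{every} panharmonic $u \in C^1 (\overline D)$, not merely positive ones, you may test directly with $u (y) = \exp (\mu \, \hat\xi \cdot y)$, obtaining $\int_D \exp (\mu \, \hat\xi \cdot y) \, \D y = K \exp (\mu \, \hat\xi \cdot x_0)$ at once; the detour through the translates $U_z$ and moment injectivity on $S^{m-1}$, though correct, is unnecessary.)

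The genuine gap is exactly where you place it and remains untouched: from the constancy in $\hat\xi$ of $\int_{G_i} \exp (\mu \hat\xi \cdot y) \, \D y - \int_{G_e} \exp (\mu \hat\xi \cdot y) \, \D y$, together with $|G_i| = |G_e|$ and the separation of $G_i$ and $G_e$ by the sphere $\partial B_r (x_0)$, you have not deduced $|G_i| = |G_e| = 0$, and it is not evident that this can be done. Constancy of a single function on $S^{m-1}$ — the restriction of the Fourier--Laplace transform of $\chi_{G_i} - \chi_{G_e}$ to one sphere of frequencies $|\zeta| = \mu$ — is far weaker than radial symmetry of $\chi_D$, and for a single fixed $\mu$ there is no uniqueness theorem available to invoke; one-frequency rigidity questions of this Pompeiu type are notoriously delicate, which is presumably why the statement is posed as a conjecture at all. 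The alternative Serrin--Weinberger route you sketch likewise stops at naming the obstacles (representing the point evaluation $u (x_0)$, the coupled Cauchy data of $u$ on $\partial D$) without overcoming them. In short: an honest, well-oriented reduction with correct partial results and a correct identification of the crux, but not a proof — and nothing in the paper fills the hole either.
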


\vspace{-4mm}

{\centering \section{Characterization of annuli and strips via mean values} }

\vspace{-2mm}

\subsection{Annuli}

It was Avici \cite{Av}, who attempted to prove an assertion inverse to Theorem~2.3
in 1981. However, the statement and its proof are both erroneous in \cite{Av}; see
the comments by Armitage and Goldstein \cite{AG}, pp.~142, 145. The correct
formulation (see the recent paper \cite{GS} by Gardiner and Sj\"odin), involving the
radius~$r_*$ given by \eqref{r*} for $m \geq 3$ and by \eqref{r*2} for $m = 2$, is
as follows.

\begin{theorem}
Let $D \subset \RR^m$, $m \geq 2$, be an open set such that $|D| < \infty$ and
$\partial B_{r_*} \subset D$. If any function $u$ harmonic in $D$ and integrable
over $D$ satisfies the identity
\begin{equation}
M^\bullet (u, D) = M^\circ (u, \partial B_{r_*}) \, , \label{MDa}
\end{equation}
then either $D$ is the $r_*$-annulus with $0 \leq r_1 < r_2$ or $D$ is an open ball
centred at the origin.
\end{theorem}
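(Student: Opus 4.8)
The plan is to convert the mean value identity \eqref{MDa} into an equality of Newtonian potentials off $D$, to recognise $D$ as a quadrature domain for the surface measure on $\partial B_{r_*}$, and then to show that the only such domains are the radially symmetric sets in the conclusion. First I would test \eqref{MDa} against the fundamental solutions $x \mapsto |x-y|^{2-m}$ (for $m \geq 3$; $x \mapsto -\log|x-y|$ when $m=2$), which are harmonic in $D$ and integrable over the finite-measure set $D$ whenever $y$ lies in the complement of $\overline D$. By the classical spherical-mean theorem, $M^\circ(|\cdot - y|^{2-m}, \partial B_{r_*})$ equals $r_*^{2-m}$ for $|y| \leq r_*$ and $|y|^{2-m}$ for $|y| \geq r_*$; that is, it equals the potential $U^\sigma$ of the normalised uniform measure $\sigma$ on $\partial B_{r_*}$. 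Hence \eqref{MDa} says precisely that the potential $U^\lambda$ of the normalised indicator $\lambda = \mathbf 1_D / |D|$ coincides with $U^\sigma$ at every point outside $\overline D$. Since $\lambda$ and $\sigma$ are both probability measures, the two potentials also share the same decay at infinity.

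Next I would study $v = U^\lambda - U^\sigma$. It is continuous, vanishes on $\RR^m \setminus \overline D$ (and, by a short argument, on $\partial D$, so that $v$ and $\nabla v$ both vanish there because neither $\lambda$ nor $\sigma$ charges $\partial D$), decays at infinity, and satisfies $\nabla^2 v = - |D|^{-1} \mathbf 1_D + \sigma$ distributionally. In other words, $D$ is a quadrature domain for integrable harmonic functions with respect to $|D| \, \sigma$, and $v$ is the associated potential. Off the sphere $\partial B_{r_*}$ one has $\nabla^2 v = - |D|^{-1} < 0$, so $v$ is superharmonic there; the variational characterisation of the corresponding obstacle problem (equivalently, partial balayage) then fixes the sign of $v$ and identifies $D$, up to a set of measure zero, with the canonical saturated quadrature domain determined by the data $|D| \, \sigma$.

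I would then exploit symmetry. Since $\sigma$ and the competing density $|D|^{-1}$ are invariant under rotations, and the obstacle problem has a unique solution, that canonical domain is itself rotationally symmetric, so $D = \{ x : |x| \in E \}$ for some relatively open $E \subset [0, \infty)$. The radial form of the free-boundary problem — $v'' + (m-1)\rho^{-1} v' = -|D|^{-1}$ on $E \setminus \{r_*\}$, with Cauchy data $v = v' = 0$ at every endpoint of $E$ and a single upward jump of $v'$ across $\rho = r_*$ — forces $E$ to be one interval: a shell not containing $r_*$ would carry no interior source, so $v = v' = 0$ at its inner endpoint together with the equation would make $\rho^{m-1} v'$ strictly negative and $v$ strictly monotone, contradicting $v = 0$ at the outer endpoint. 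Thus $D$ is a single annulus $A(r_1, r_2)$, degenerating to a centred ball when $r_1 = 0$. Finally, writing out the matching condition that $v$ return to $0$ inside the hole (equivalently $U^\lambda(0) = U^\sigma(0)$) yields $r_*^{m-2} = \tfrac{2}{m}\,(r_2^m - r_1^m)/(r_2^2 - r_1^2)$, i.e. exactly \eqref{r*} (and \eqref{r*2} for $m=2$); this is the converse of Theorem~2.3 and closes the argument.

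The main obstacle is the rigidity underlying the second and third paragraphs: showing that $D$ really is the symmetric canonical quadrature domain rather than merely sharing its exterior potential. The elementary Kuran-type device — comparing $D$ with a reference set of equal volume and using monotonicity of the kernel on the symmetric difference — does not close here, because the complement of an annulus has two components, an inner hole and an outer region, so the symmetric difference straddles $\partial B_{r_*}$ and the single-sign kernel comparison breaks down. One is therefore forced to use the uniqueness of the obstacle-problem solution together with a careful determination of the sign of $v$ near $\partial B_{r_*}$; this, rather than the potential-theoretic bookkeeping, is the delicate step. Minor additional care is needed to treat the logarithmic kernel and its growth at infinity when $m = 2$, and to handle the degenerate ball case $r_1 = 0$, where the inner Cauchy condition is replaced by regularity at the origin.
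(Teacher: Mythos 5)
Your overall framing --- testing \eqref{MDa} against the fundamental solutions with poles in $\RR^m \setminus \overline D$, reading the result as equality of the potentials $U^\lambda$ and $U^\sigma$ off $D$, and so viewing $D$ as a quadrature domain for the measure $|D|\,\sigma$ --- is indeed the route taken by Gardiner and Sj\"odin. But the pivotal step of your second and third paragraphs contains a genuine gap: you invoke uniqueness of the obstacle-problem solution (partial balayage) to identify $D$ with the canonical saturated quadrature domain, yet partial balayage characterizes quadrature domains for \emph{subharmonic} functions, where one has the quadrature \emph{inequality} and hence the sign condition $v \geq 0$ for $v = U^\lambda - U^\sigma$. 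The harmonic identity \eqref{MDa} gives $v = 0$ outside $\overline D$ but no sign of $v$ inside $D$, and without that sign $D$ simply does not sit inside the obstacle-problem framework; you concede that ``a careful determination of the sign of $v$ near $\partial B_{r_*}$'' is needed, but this is precisely the hard content of the theorem, not a deferrable detail. Indeed harmonic quadrature data of this kind genuinely fail to pin the domain down: Theorem~4.2 of the survey shows that the potential identities alone only force $D$ to be an $r_*$-annulus or $B \setminus T$ with $T$ a (possibly nonempty) subset of a sphere $\partial B_{r_0}$, $r_0 < r_*$. Your argument, even granting the symmetrization, determines $D$ only ``up to a set of measure zero'' --- which is exactly the weaker conclusion of Theorem~4.2, not of Theorem~4.1. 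The missing final step, present in Gardiner--Sj\"odin's proof and flagged in the survey, is to eliminate $T$ by exhibiting a particular harmonic function on $B \setminus T$ (one not extendible across $T$) whose means violate \eqref{MDa}; your proposal has no counterpart to this, and your exceptional sets swallowed by ``up to measure zero'' are precisely such $T$.

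Two further points. First, for $m = 2$ the kernel $-\log|x - y|$ need not be integrable over an unbounded open set of finite area, so your opening move may furnish no admissible test functions at all; this is why the potential-only statement (Theorem~4.2) assumes $D$ bounded in two dimensions, whereas Theorem~4.1 assumes only $|D| < \infty$ and must therefore exploit the full class of integrable harmonic functions --- your ``minor additional care'' for the logarithmic case understates a real obstruction. Second, the proof described in the survey does not proceed through obstacle-problem uniqueness at all: Theorem~4.2 rests on the Hansen--Netuka result (Theorem~4.3), which works with differences $U_F - U_G$ of potentials of compact subsets of the complement and yields $|B \setminus E| = 0$; the survey explicitly calls the derivation ``highly technical and rather long,'' which should be read as a warning that no soft uniqueness principle of the kind you invoke is known to close the argument. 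Your radial ODE analysis in the fourth paragraph and the recovery of \eqref{r*} are fine as far as they go, but they operate downstream of the unproven identification and so do not repair the gap.
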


Already in 1989, Armitage and Goldstein \cite{AG} obtained a weaker result, namely:
identity \eqref{MDa} implies that $\bar D$ is either the closed $r_*$-annulus or the
closed ball centred at the origin. Later they asked, in Problem 3.35 of \cite{HL},
whether a similar assertion is true when only a fundamental solution of Laplace's
equation is involved in \eqref{MDa}. An improved version of that problem involving
the following solution
\[ g (x,y) = \phi_m (|x-y|) , \ \ \mbox{where} \ \phi_m (t) = t^{2-m} \ \mbox{for} \ m 
\geq 3 \ \mbox{and} \ \phi_2 (t) = - \log t \, ,
\] 
was formulated by Gardiner and Sj\"odin \cite{GS} as follows.

\begin{theorem}
Let $D \subset \RR^m$, $m \geq 3$, be an open set such that $|D| < \infty$ $(let \
D$ be bounded in two dimensions$)$ and $\partial B_{r_*} \subset D$. If $g (x,y)$
satisfies identity \eqref{MDa} for any $y \in \RR^m \setminus D$, then either $D$ is
the $r_*$-annulus with $0 \leq r_1 < r_2$ or $D = B \setminus T$, where $B$ is a
ball centred at the origin and $T \subset \partial B_{r_0}$ for some $r_0 < r_*$
$(T$ may be empty$)$.
\end{theorem}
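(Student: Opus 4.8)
The plan is to translate the hypothesis into an identity between Newtonian potentials and then to recognise the resulting configuration as the solution of an obstacle problem, whose uniqueness and rotational symmetry force $D$ to be radial.

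First I would take $u = g(\cdot, y) = \phi_m(|\cdot - y|)$ for a fixed $y \in \RR^m \setminus D$; since $y \notin D$, this $u$ is harmonic in $D$ and integrable over $D$, so \eqref{MDa} applies. The left-hand side $M^\bullet(g(\cdot,y), D)$ is, up to the factor $1/|D|$, the volume potential $\int_D \phi_m(|x-y|)\,\D x$, while the right-hand side $M^\circ(g(\cdot,y), \partial B_{r_*})$ is the mean over $\partial B_{r_*}$ of the function $g(\cdot,y)$, harmonic in $x$. Evaluating the latter by the mean value property (Theorem 2.1) and the symmetry of $g$ gives the potential of the uniform unit shell on $\partial B_{r_*}$, namely $\max(|y|, r_*)^{2-m}$ when $m \ge 3$ (the point-mass value $|y|^{2-m}$ for $|y| > r_*$, and the constant centre value for $|y| < r_*$), and $-\log\max(|y|, r_*)$ when $m = 2$. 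Writing $\lambda_*$ for the uniform measure of total mass $|D|$ on $\partial B_{r_*}$ and $\mathcal{U}^\lambda(y) = \int \phi_m(|x-y|)\,\D\lambda(x)$, the hypothesis becomes the potential identity $\mathcal{U}^{\mathbf{1}_D} = \mathcal{U}^{\lambda_*}$ throughout $\RR^m \setminus D$. I would then study $w = \mathcal{U}^{\mathbf{1}_D} - \mathcal{U}^{\lambda_*}$, which is continuous, vanishes on the closed set $\RR^m \setminus D$, is harmonic in $\RR^m \setminus \overline D$, and satisfies $\nabla^2 w = -\gamma_m(\mathbf{1}_D - \D\lambda_*/\D x)$ distributionally for a dimensional constant $\gamma_m > 0$; in particular $\nabla^2 w = -\gamma_m$ in $D \setminus \partial B_{r_*}$.

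The key observation is that $w$ is precisely the solution of the obstacle problem associated with the partial balayage of $\lambda_*$ onto Lebesgue measure at unit density. I would first show $w \ge 0$ (off the sphere $w$ is superharmonic and vanishes on $\RR^m \setminus D$, the positive shell contribution on $\partial B_{r_*}$ being controlled by the minimum principle), so that the positivity set $\{w > 0\}$ coincides with $D$ up to a set of measure zero and the sphere. Then I would invoke uniqueness of the minimiser of the strictly convex obstacle functional with source $\lambda_*$: since $\lambda_*$ is invariant under every rotation $R$ about the origin, $y \mapsto w(Ry)$ solves the same problem, whence $w(Ry) = w(y)$ and $w$, its positivity set, and $D$ are rotationally symmetric up to a null set. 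With symmetry in hand I would pass to the radial variable and analyse $w'' + (m-1)\rho^{-1} w' = -\gamma_m$ off the free boundary, together with $w \ge 0$, the matching of $w$ and $w'$ away from $\partial B_{r_*}$, and the jump of $w'$ across $\partial B_{r_*}$ induced by $\lambda_*$; this shows the coincidence set is a union of radial intervals, so $D$ agrees up to measure zero with an annulus $A(r_1, r_2)$ (a ball when $r_1 = 0$), and the agreement of the two potentials in the inner hole $\{|y| < r_1\}$ forces exactly the normalising relation \eqref{r*} for $m \ge 3$ and \eqref{r*2} for $m = 2$, i.e.\ $A(r_1,r_2)$ is the $r_*$-annulus. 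I expect this combined sign/uniqueness step yielding symmetry, together with the radial free-boundary computation, to be the main obstacle, since the potential determines $D$ only modulo null sets and the obstacle framework must be set up with care when $D$ is merely of finite measure (and possibly unbounded) for $m \ge 3$.

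Finally I would account for the measure-zero ambiguity to recover the exact statement. Because $\mathcal{U}^{\mathbf{1}_D}$ is unchanged when $D$ is altered by a null set, the genuine open set $D$ may differ from the radial model by a closed set $K$ of measure zero; the requirements that $D$ be open, that $\partial B_{r_*} \subset D$, and that $w = 0$ on $K \subset \RR^m \setminus D$ force $K$ to lie on a single interior sphere $\partial B_{r_0}$ on which the radial $w$ vanishes, and a monotonicity argument for $w$ places this zero at some $r_0 < r_*$. This produces the two alternatives: either $K = \emptyset$ and $D$ is the $r_*$-annulus (a ball when $r_1 = 0$), or $D = B \setminus T$ with $B$ a ball centred at the origin and $T \subset \partial B_{r_0}$, $r_0 < r_*$. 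The two-dimensional case is treated identically with $\phi_2 = -\log$, the logarithmic shell potential $-\log\max(|y|, r_*)$, and the normalisation \eqref{r*2} in place of \eqref{r*}.
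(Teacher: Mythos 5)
Your opening reduction is sound: taking $u=g(\cdot,y)$ and evaluating the spherical mean by Newton's theorem does turn \eqref{MDa} into the potential identity $U^{\mathbf 1_D}=U^{\lambda_*}$ on $\RR^m\setminus D$, and this is also where Gardiner and Sj\"odin start. But the pivotal claim --- that $w=U^{\mathbf 1_D}-U^{\lambda_*}$ is one-signed and is the solution of the obstacle problem for the partial balayage of $\lambda_*$ --- is false, and the failure is structural, not a repairable orientation slip. Test it on the two families that the theorem itself admits. (i) For the $r_*$-annulus $A(r_1,r_2)$ with $m\geq 3$, a radial computation using \eqref{r*} gives, for $r_1<\rho<r_*$, $w(\rho)=\omega_m\bigl[\rho^2-r_1^m\rho^{2-m}-\tfrac m2(\rho^2-r_1^2)\bigr]$, with $w(r_1)=0$ and $w'(\rho)=(2-m)\,\omega_m(\rho-r_1^m\rho^{1-m})<0$; hence $w<0$ strictly inside the annulus, the set $\{w>0\}$ is empty rather than equal to $D$, and your minimum-principle justification of $w\geq 0$ cannot be repaired: the singular part $+\gamma_m\lambda_*$ of $\nabla^2 w$ on $\partial B_{r_*}$ makes $w$ dip \emph{below} zero exactly on the part of the boundary of the components of $D\setminus\partial B_{r_*}$ that you do not control. (ii) Reversing the sign does not save the plan either: \emph{every} ball $B_{r_2}$ with $r_2>r_*$ satisfies the hypothesis (for $|y|\geq r_2$ both sides of \eqref{MDa} equal $\phi_m(|y|)$ by Newton's theorem) and is permitted by the conclusion, yet $w(0)=\omega_m r_2^2\bigl[\tfrac m2-(r_2/r_*)^{m-2}\bigr]>0$ whenever $r_*<r_2<(m/2)^{1/(m-2)}\,r_*$ (e.g.\ $r_2<1.5\,r_*$ for $m=3$). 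So neither $w\geq 0$ nor $w\leq 0$ holds across the admissible configurations.

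This defeats the strategy as a whole, not just one lemma. Uniqueness of the minimiser of the strictly convex obstacle functional would identify $\mathbf 1_D$ a.e.\ with the single measure $\mathrm{Bal}(\lambda_*,1)$, hence with \emph{one} rotationally symmetric domain per prescribed volume; but the correct solution set is a genuine family --- the $r_*$-annulus \emph{and} every ball $B\supset\overline{B_{r_*}}$ of arbitrary radius, possibly minus a null set $T$ on one interior sphere. A two-sided (harmonic) quadrature identity, known only at points of $\RR^m\setminus D$, does not make $D$ a \emph{subharmonic} quadrature domain, and only the latter class is captured by partial balayage; your argument, if it ran, would prove that the annulus is the unique solution, contradicting the very statement being proved. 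This is precisely why --- as the survey (which itself offers no proof) records --- the Gardiner--Sj\"odin argument avoids the obstacle problem: it rests on the Hansen--Netuka inverse mean value theorem (Theorem 4.3 above, in the strengthened form $U_E(y)=|E|\,g(0,y)$ a.e.\ outside $E$) combined with long fine-potential-theoretic work, and it is their case analysis, not convexity or balayage uniqueness, that produces the ball-minus-$T$ branch and pins $r_0<r_*$. Your concluding phenomenology (a single interior sphere of zeros of the radial $w$, located at $r_0<r_*$, empty exceptional set in the annulus case) is consistent with the computations above, but the engine you propose to reach it cannot start.
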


Their proof of this theorem is based on the main result of Hansen and Netuka
\cite{HN} dealing with a mean value property of $U_E (y) = \int_E g (x,y) \, \D x$,
where $E \subset \RR^m$ is Lebesgue measurable and $|E| \in (0, \infty)$.

\begin{theorem}
Let $B \subset \RR^m$, $m \geq 2$, be the open ball centred at the origin and such
that $|B| = |E| \in (0, \infty)$. If for any compact $F, G \subset \RR^m \setminus
E$ the identity
\begin{equation}
|E|^{-1} \int_E \left[ U_F (y) - U_G (y) \right] \D y = U_F (0) - U_G (0)
\label{4.3}
\end{equation}
is valid provided $U_F - U_G$ is bounded, then $|B \setminus E| = 0$.
\end{theorem}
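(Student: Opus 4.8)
The plan is to convert the hypothesis into a single overdetermined condition on the potential $U_E$, to recognise that condition through Newton's theorem, and then to use the maximum principle to force $E$ to coincide with $B$ up to a null set. First I would symmetrise by Fubini's theorem: since $g(x,y)=g(y,x)$, for every compact $F\subset\RR^m\setminus E$ one has $\int_E U_F(y)\,\D y=\int_F U_E(x)\,\D x$, and likewise for $G$, so that \eqref{4.3} becomes
\[
\int_F\psi\,\D x=\int_G\psi\,\D x,\qquad \psi(x):=|E|^{-1}U_E(x)-g(x,0),
\]
for all admissible pairs $F,G$. For $m\ge3$ the singularity $|x-y|^{2-m}$ is integrable, so each $U_F$ is bounded and continuous and the boundedness constraint is vacuous; letting $F,G$ range over disjoint compacts then gives $\int_F\psi=0$ for every compact $F\subset\RR^m\setminus E$, whence $\psi=0$ a.e.\ on $\RR^m\setminus E$, that is $U_E(x)=|E|\,|x|^{2-m}$ for a.e.\ $x\notin E$. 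For $m=2$ one has $U_F(y)\sim-|F|\log|y|$, so the boundedness requirement forces $|F|=|G|$; the same scheme then shows $\psi$ to be a.e.\ constant on $\RR^m\setminus E$, and that constant is pinned to zero by the decay at infinity of the difference of the two potentials.

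The pivotal remark is that the right-hand side is exactly the potential of the ball. Since $g(\cdot,y)$ is harmonic in $B$ whenever $y\notin\overline B$, the mean value property \eqref{Mb} yields $U_B(y)=|B|\,g(0,y)=|E|\,|y|^{2-m}$ for $y\notin\overline B$. Writing $V(x):=|E|\,|x|^{2-m}$ for the point-mass potential, I would record that $U_E=V$ a.e.\ on $\RR^m\setminus E$, while Newton's theorem gives $U_B=V$ on $\RR^m\setminus\overline B$ together with $s_B:=V-U_B>0$ inside $B$ and $s_B=0$ outside $\overline B$. For $w:=U_E-U_B$, which is continuous and satisfies $\Delta w=-c\,(\mathbf1_E-\mathbf1_B)$ with $c>0$ (from $\Delta|x|^{2-m}=-c\,\delta_0$), these two facts say precisely that $w=s_B\ge0$ a.e.\ on $\RR^m\setminus E$.

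The conclusion is then purely potential-theoretic. On the open set $\{w<0\}$, which lies in $E$ up to a null set, one has $\mathbf1_E=1$ a.e., so $-\Delta w=c\,(\mathbf1_E-\mathbf1_B)\ge0$ and $w$ is superharmonic; it vanishes on the boundary and at infinity, so the minimum principle forces $w\ge0$ there, which is absurd unless $\{w<0\}=\emptyset$. Thus $w\ge0$ everywhere. Next, on the connected set $\RR^m\setminus\overline B$ we have $\mathbf1_B=0$, so $-\Delta w=c\,\mathbf1_E\ge0$ and $w$ is a nonnegative superharmonic function there; since $w=s_B=0$ a.e.\ on the positive-measure set $(\RR^m\setminus E)\cap(\RR^m\setminus\overline B)$, it attains its infimum $0$ at an interior point, and the strong minimum principle makes $w\equiv0$ on $\RR^m\setminus\overline B$. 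Hence $\mathbf1_E=0$ a.e.\ outside $\overline B$, i.e.\ $|E\setminus\overline B|=0$, and together with $|E|=|B|$ this gives $|\overline B\setminus E|=0$ and therefore $|B\setminus E|=0$.

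The hard part will be this last step: turning the exterior equality of potentials into a statement about the sets themselves, when $E$ is only assumed measurable so that $\partial E$ carries no regularity, and with the singularity of $V$ at the origin to be absorbed. The Fubini reduction and the exterior identity are routine once the Newton-theorem interpretation is noticed; the genuine content lies in the maximum/minimum-principle propagation, underpinned by the standard but non-trivial continuity of the Newtonian potential of a bounded density of finite total mass, which is what licenses the appeal to the strong minimum principle.
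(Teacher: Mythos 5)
You should know at the outset that this survey contains no proof of this theorem: it is quoted verbatim from Hansen and Netuka \cite{HN}, and the only glimpse the paper gives of their argument is the remark immediately following the statement, namely that for $m \geq 3$ hypothesis \eqref{4.3} amounts to requiring $U_E (y) = |E| \, g (0,y)$ almost everywhere outside $E$, the two-dimensional analogue being ``slightly more complicated.'' Your opening reduction --- symmetrising by Fubini to get $\int_F \psi = \int_G \psi$ with $\psi = |E|^{-1} U_E - g (\cdot, 0)$, and observing that for $m \geq 3$ the boundedness of $U_F - U_G$ is automatic, so that $\psi = 0$ a.e.\ off $E$ --- reproduces exactly the reduction the paper attributes to \cite{HN}. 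Your continuation is the classical ``potato kugel'' comparison in the spirit of Aharonov, Schiffer and Zalcman \cite{ASZ} (cf.\ Theorem 3.1 of the survey): Newton's theorem identifies the exterior data with $U_B$, and the two minimum-principle steps for $w = U_E - U_B$ --- first on the open set $\{ w < 0 \}$, which lies in $E$ up to a null set so that $w$ is superharmonic there and vanishes on the boundary and at infinity, then on the connected set $\RR^m \setminus \overline B$, where $w \geq 0$ is superharmonic and vanishes on a set of positive measure --- are correctly executed. For $m \geq 3$ I see no gap: the supporting facts you invoke (continuity and decay at infinity of the potential of a merely measurable set of finite measure, superharmonicity from the distributional sign of $\nabla^2 w$, the minimum principle with the point at infinity adjoined to the boundary) all hold, and the final bookkeeping $|E \setminus \overline B| = 0$, $|E| = |B|$, hence $|B \setminus E| = 0$, is right.

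The one soft spot is the planar case, which the theorem also covers. There the logarithmic potential of an unbounded set of finite measure need not even be well defined, the Fubini interchange involves a kernel of both signs, and your constancy argument for $\psi$ (available only for equal-area pairs $F$, $G$, since boundedness of $U_F - U_G$ forces $|F| = |G|$) leaves an additive constant which you pin down by a decay claim that is genuinely delicate for unbounded $E$; this is precisely the point at which the survey warns that the requirement becomes ``more complicated.'' So your proposal is sound and essentially complete for $m \geq 3$, and it matches the route indicated for \cite{HN}; the case $m = 2$ is only gestured at and would need the extra care you defer.
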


In particular, the proof of this theorem implies that the requirement $U_E (y) = |E|
\, g (0,y)$ almost everywhere outside $E$ may be used instead of identity
\eqref{4.3} in the last theorem provided $m \geq 3$; for $m = 2$ the analogous
requirement is slightly more complicated.

Gardiner and Sj\"odin's derivation of Theorem 4.2 is highly technical and rather
long. Subsequently, in order to prove Theorem 4.1 it remains to show that the set
$T$ permissible by Theorem 4.2 is empty. This is achieved by choosing some
particular harmonic function and demonstrating that it satisfies an inequality which
contradicts \eqref{MDa}.

An alternative approach to characterizations of two-dimensional annuli was developed
by Rodr\'igues \cite{R}; in some sense, it is similar to Bennett's \cite{Be} for
balls (see Theorems~3.2 and 3.3 above). She assumes that $D \subset \RR^2$ is a
bounded, finitely connected domain, whose boundary $\partial D$ consists of two or
more closed analytic curves which are pairwise disjoint. Let $\Gamma_0$ denote the
curve separating $D$ from infinity and let $\Gamma_1 = \partial D \setminus
\Gamma_0$; the simply connected domain within $\Gamma_0$ is denoted by $D_0$ and
$D_1 = D_0 \setminus \bar D$; also, let $c_0 = |D_0| / |\Gamma_0|$ and $c_1 = -
|D_1| / |\Gamma_1|$. Now, we are in a position to formulate the result obtained in
\cite{R}.

\begin{theorem}
Let $D \subset \RR^2$ be a bounded domain and $\partial D$ consists of a finite
number (two or more) closed analytic curves which are pairwise disjoint. Then the
following asser\-tions are equivalent:

\noindent {\rm (i)} For some $u \in C^2 (D) \cap C^1 (\overline D)$ the relations
\begin{equation*}
\nabla^2 u = - 1 \ in \ D ; \ \ u = - c_0^2 , \ \partial u / \partial n = - c_0 \ on
\ \Gamma_0 ; \ \ u = - c_1^2 , \ \partial u / \partial n = - c_1 \ on \ \Gamma_1 
\end{equation*}
are fulfilled with $c_0$ and $c_1$ defined above.

\noindent {\rm (ii)} The quadrature identity
\[ \int_D h \, \D x = c_0 \int_{\Gamma_0} h \, \D S + c_1 \int_{\Gamma_1} h \, \D S
- c_0^2 \int_{\Gamma_0} \partial h / \partial n \, \D S - c_1^2 \int_{\Gamma_1}
\partial h / \partial n \, \D S
\]
is valid for every harmonic in $D$ function $h \in C^1 (\overline D)$.

\noindent {\rm (iii)} $D$ is an annulus centred at the origin, whose smaller radius
is $- 2 c_1$ and the larger one is $2 c_0$.
\end{theorem}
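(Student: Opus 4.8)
The plan is to prove the four implications (iii)$\Rightarrow$(i), (i)$\Rightarrow$(ii), (ii)$\Rightarrow$(i) and (i)$\Rightarrow$(iii), which together yield the equivalence. Two of these are essentially computational. For (iii)$\Rightarrow$(i), when $D$ is the concentric annulus with radii $-2c_1$ and $2c_0$, I would verify directly that $u(x) = -|x|^2/4$ solves $\nabla^2 u = -1$ and realizes all four boundary relations: on $|x| = 2c_0$ one has $u = -c_0^2$ and $\partial u/\partial n = -c_0$, while on $|x| = -2c_1$ (recall $c_1 < 0$) one gets $u = -c_1^2$ and $\partial u/\partial n = -c_1$, the sign being produced by the inward-pointing exterior normal of $D$ on the inner circle. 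For (i)$\Rightarrow$(ii), I would repeat Bennett's manipulation from the proof of Theorem~3.2: apply Green's second identity to $u$ and a harmonic $h$, use $\nabla^2 u = -1$ and $\nabla^2 h = 0$ to reduce $\int_D h\,\D x$ to boundary integrals over $\Gamma_0 \cup \Gamma_1$, and substitute the prescribed values of $u$ and $\partial u/\partial n$ on each component; the quadrature identity (ii) then drops out term by term.

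For (ii)$\Rightarrow$(i), I would first solve the Dirichlet problem $\nabla^2 u = -1$ in $D$ with $u = -c_0^2$ on $\Gamma_0$ and $u = -c_1^2$ on $\Gamma_1$, obtaining $u \in C^2(D) \cap C^1(\overline D)$; the analyticity of $\partial D$ supplies the regularity up to the boundary. Inserting this $u$ into Green's identity and subtracting (ii) makes the normal-derivative terms of $h$ cancel, leaving $\int_{\Gamma_0} h\,(c_0 + \partial u/\partial n)\,\D S + \int_{\Gamma_1} h\,(c_1 + \partial u/\partial n)\,\D S = 0$ for every harmonic $h$. Since the boundary values of harmonic functions may be prescribed independently on each component, both brackets must vanish, which recovers the Neumann data $\partial u/\partial n = -c_0$ on $\Gamma_0$ and $= -c_1$ on $\Gamma_1$, that is, statement (i).

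The heart of the matter is (i)$\Rightarrow$(iii), where I would imitate Weinberger's argument for Theorem~3.3 adapted to the annular configuration. Introduce $P = |\nabla u|^2 + u$ (the coefficient $1$ being $2/m$ for $m = 2$). A short computation using $\nabla^2 u = -1$ gives $\nabla^2 P = 2\sum_{i,j}\big(\partial_i\partial_j u + \tfrac12\,\delta_{ij}\big)^2 \ge 0$, so $P$ is subharmonic; and on each curve, where $u = -c_i^2$ and $|\nabla u|^2 = (\partial u/\partial n)^2 = c_i^2$, the two contributions cancel and $P = 0$. This vanishing of $P$ on \emph{both} components is exactly what the special Dirichlet values $-c_i^2$ in (i) are engineered to produce, and it is the point at which the annular case departs from Serrin's. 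The maximum principle then gives $P \le 0$ throughout $D$, while a Rellich--Pohozaev identity (multiply $\nabla^2 u = -1$ by $x \cdot \nabla u$ and integrate by parts, using that $u$ and $|\nabla u|^2$ are constant on each $\Gamma_i$) evaluates $\int_D P\,\D x$ in terms of $|D_0|$, $|D_1|$ and $c_0, c_1$; the definitions $c_0 = |D_0|/|\Gamma_0|$ and $c_1 = -|D_1|/|\Gamma_1|$ are precisely those that make this integral vanish. Combining $P \le 0$ with $\int_D P = 0$ forces $P \equiv 0$, hence equality in the Cauchy--Schwarz step, so the Hessian of $u$ equals $-\tfrac12$ times the identity. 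Thus $u(x) = -|x - x_0|^2/4$ for some $x_0$, its level sets are concentric circles about $x_0$, and since $\partial D$ lies in $\{u = -c_0^2\} \cup \{u = -c_1^2\}$ the domain is the annulus (or disk) centred at $x_0$; taking $x_0$ as the origin and reading the radii from $|\nabla u| = c_i$ gives outer radius $2c_0$ and inner radius $-2c_1$, which is (iii).

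The main obstacle is this last implication, and within it the verification that $\int_D P\,\D x = 0$: one must carry the Rellich--Pohozaev integration by parts through \emph{two} boundary components with opposite normal orientations, keeping track of the terms $\int_{\Gamma_i}(x \cdot n)\,\D S = \pm 2|D_i|$ whose signs differ on the outer and inner curves, and check that they combine with the surface terms $c_i^3|\Gamma_i|$ to give exactly zero. A secondary subtlety is that the hypothesis permits $\partial D$ to consist of more than two curves a priori; the argument must rule this out, which it does once $u = -|x-x_0|^2/4$ is known, since the single value $-c_1^2$ then confines all inner curves to one circle. Finally, I would confirm that the analyticity assumption is used only to guarantee the regularity up to the boundary needed for the maximum principle and the integrations by parts.
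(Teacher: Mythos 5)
Your proposal is correct and follows exactly the route the paper indicates for this theorem: Bennett-style Green's-identity manipulations for (i)$\Leftrightarrow$(ii) (mirroring the paper's proof of Theorem~3.2) and Weinberger's subharmonic function $|\nabla u|^2 + 2u/m$ (here $P = |\nabla u|^2 + u$) for the symmetry step, which is precisely what the survey says Rodr\'iguez's proof rests on. Your Rellich--Pohozaev bookkeeping also checks out: with $\int_{\Gamma_0} (x\cdot n)\,\D S = 2|D_0|$ and $\int_{\Gamma_1} (x\cdot n)\,\D S = -2|D_1|$ one obtains $2\int_D u\,\D x = c_1^2 |D_1| - c_0^2 |D_0|$ and $\int_D |\nabla u|^2\,\D x = c_0^3 |\Gamma_0| + c_1^3 |\Gamma_1| + \int_D u\,\D x$, so the definitions $c_0 = |D_0|/|\Gamma_0|$, $c_1 = -|D_1|/|\Gamma_1|$ give $\int_D P\,\D x = 0$ exactly as you claim.
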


As in the case of the quadrature identity \eqref{3.3} yielding that $D$ is a ball,
the proof is based on considerations of Weinberger's note \cite{W} involving
properties of $|\nabla u|^2 + 2 u / m$, where $u$ satisfies the first relation
\eqref{3.2}.

\subsection{Strips}

Some early partial results concerning characterization of strips by harmonic
functions were surveyed in 1992 in a conference proceedings; see \cite{AG3} and
\cite{GHR2}. The following general inverse of Theorem~2.4 was obtained by Armitage
and Nelson \cite{AN} next year. 

\begin{theorem}
Let $D \subset \RR^{m+1}$, $m \geq 2$, be an open subset of $S (-a, a)$ for some $a
\in (0, \infty)$ and let $\{0\} \times \RR^m$ be a proper subset of $D$. If the
identity\\[-3mm]
\begin{equation}
\int_D h (t, x) \, \D t \D x = 2 \int_{\RR^m} h (0, x) \, \D x \label{AN}
\end{equation}
is valid for every positive $h$ harmonic in $D$ and integrable over $D$, then $D = S
(-1, 1)$.
\end{theorem}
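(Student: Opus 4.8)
The plan is to pit $D$ against the candidate strip $V = S(-1,1)$ and to exploit Theorem~2.4, which supplies the forward mean value property for $V$. Set $c = \max\{a,1\}$, so that both $D$ and $V$ lie in $S(-c,c)$. Every function $h$ that is positive, harmonic and integrable on $S(-c,c)$ is then a legitimate test function for the hypothesis \eqref{AN} (being harmonic and integrable on $D \subset S(-c,c)$) and at the same time obeys Theorem~2.4 on $V$. Since \eqref{AN} and the instance of \eqref{Mst} with $(a,b)=(-1,1)$ share the same right-hand side $2\int_{\RR^m} h(0,x)\,\D x$, subtracting them cancels that term and leaves
\[
\int_{D\setminus V} h\,\D t\,\D x = \int_{V\setminus D} h\,\D t\,\D x
\]
for every such $h$. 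The whole problem is thereby reduced to showing that this identity, valid for all positive integrable harmonic $h$, forces $|D\setminus V| = |V\setminus D| = 0$. The geometry driving the argument is the separation in the $t$-variable: since $D\subset S(-a,a)$, the excess $D\setminus V$ lives in the outer slabs $\{1\le|t|<c\}\times\RR^m$, whereas the deficit $V\setminus D$ lives in the central slab $\{|t|<1\}\times\RR^m$.

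Next I would feed in an explicit family of positive test functions, the Poisson kernels $P(\,\cdot\,;y)$ of the strip $S(-c,c)$ with pole at a boundary point $(c,y)$ or $(-c,y)$. Each is positive and harmonic on $S(-c,c)$, and computing its hyperplane means shows it is integrable over the strip, with total mass one; in particular it is a valid test function. Inserting $h=P(\,\cdot\,;y)$ into the reduced identity, for every $y$ and for poles on both the top and the bottom boundary, yields the \emph{sweeping} identities
\[
\int_{D\setminus V} P(t,x;y)\,\D t\,\D x = \int_{V\setminus D} P(t,x;y)\,\D t\,\D x .
\]
In potential-theoretic terms, the balayage of the region $D\setminus V$ onto each boundary hyperplane of $S(-c,c)$ coincides with that of $V\setminus D$. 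Equivalently, the Green potential $w$ of the signed density $g=\chi_{D\setminus V}-\chi_{V\setminus D}$ has vanishing Dirichlet \emph{and} Neumann data on $\partial S(-c,c)$.

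The crux — and the step I expect to be the main obstacle — is to turn these zero Cauchy data into the vanishing of $g$, and here the sign-separation in $t$ is decisive. In each outer slab the source keeps a fixed sign, $-\nabla^2 w = g = \chi_{D\setminus V}\ge 0$, so $w$ is superharmonic there, while on the outer face $\{|t|=c\}$ both $w$ and $\partial w/\partial n$ vanish. Extending $w$ by zero across $\{|t|=c\}$ produces a $C^1$ function that is superharmonic on each outer slab and identically zero beyond the strip; a uniqueness argument for this Cauchy problem — equivalently, unique continuation from the vanishing Cauchy data, the fixed sign of the source ruling out a nontrivial solution — then forces $w\equiv 0$ in the outer slabs, whence $|D\setminus V|=0$. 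Once the excess is null, the reduced identity collapses to $\int_{V\setminus D} h = 0$ for every positive $h$, giving $|V\setminus D|=0$ at once, so that $|D\,\triangle\,V|=0$. This Cauchy/unique-continuation step, where the one-sided sign of $g$ is exploited, is where the real work lies.

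Finally I would upgrade this almost-everywhere equality to the set equality $D=V$. As $D$ is open, $V\setminus D$ is relatively closed in $V$ and null; a removability argument shows that a set invisible to every positive integrable harmonic function must be empty, so $V\subseteq D$, while the reverse inclusion follows from $|D\setminus V|=0$ together with the openness of $D$ and the containment $D\subset S(-a,a)$. The degenerate case $a<1$ is excluded along the way: it makes $D\setminus V$ empty outright, so the reduced identity already forces $|V\setminus D|=0$ and hence $V\subseteq\overline D\subset S(-a,a)$, which is impossible for $a<1$. Thus necessarily $a\ge 1$ and $D=V=S(-1,1)$, as claimed.
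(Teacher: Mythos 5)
Your reduction step is sound: for $h$ positive, harmonic and integrable on the big strip $S(-c,c)$, combining \eqref{AN} with \eqref{Mst} applied on $V=S(-1,1)$ does give $\int_{D\setminus V} h = \int_{V\setminus D} h$, and your dispatch of the degenerate case $a<1$ is correct. The genuine gap is exactly the step you flag as the crux, and the principle you invoke there is not merely unproven but false: vanishing Cauchy data on the outer face together with a one-signed source does \emph{not} force $w\equiv 0$ in the outer slab by any local uniqueness or unique-continuation argument. Already the one-dimensional fibre model shows this: $-w''=\rho$ on $(1,c)$ with $w(c)=w'(c)=0$ has the nonzero solution $w(t)=-\int_t^c\!\!\int_s^c\rho(\sigma)\,\D\sigma\,\D s$ for \emph{every} nonzero $\rho\geq 0$, so the sign of the source rules out nothing. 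Likewise your zero-extension $\tilde w$ is superharmonic and vanishes on an open half-space, but a superharmonic function may vanish on an open set without vanishing identically (e.g.\ $\min(0,h)$ with $h$ harmonic changing sign); the minimum principle gives no contradiction because nothing makes $0$ the minimum of $\tilde w$. The structural reason the soft argument cannot work: taking Fourier transforms in $x$, the condition ``zero Dirichlet and Neumann data of the strip Green potential on both faces'' is precisely $\int g(t,x)\,\E^{\pm|\xi|t}\E^{\ii\xi\cdot x}\,\D t\,\D x=0$ for all $\xi$, i.e.\ orthogonality of $g=\chi_{D\setminus V}-\chi_{V\setminus D}$ to the harmonic exponentials, and the kernel of this condition contains every $g=\nabla^2\phi$ with $\phi$ smooth and compactly supported in the strip. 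So the sweeping identities alone are nowhere near rigid; any proof must genuinely exploit that $g$ is a difference of characteristic functions with slab-separated supports, a quadrature-domain rigidity statement requiring real work. This is what Armitage and Nelson's argument actually supplies, by quite different means: the Green potential of the half-space $S(-a-1,+\infty)$ together with Lemma~4.1 yields first that $D$ is connected, then that $D\subset S(g-2,g)$ with $|S(g-2,g)\setminus D|=0$, and finally $g=1$.

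A second, unavoidable defect: your test class --- functions harmonic on all of $S(-c,c)$ --- can never yield the set equality $D=S(-1,1)$, only $|D\,\triangle\,S(-1,1)|=0$, because $D=S(-1,1)\setminus K$ with $K$ a closed null set (a sphere, say) satisfies your reduced identity for every such $h$. Your concluding ``removability'' step is therefore forced to reintroduce functions harmonic only on $D$ (e.g.\ potentials of measures on $K$, singular across $K$) and to verify that they violate \eqref{AN}; this is not a routine removability fact and is left unsubstantiated. Note that the paper's proof uses test functions harmonic merely on $D$ from the outset --- for instance $G(t,x;a,0)\,(\chi_{D_c}+k\chi_{D\setminus D_c})$ with $k=1,2$ to prove connectedness --- which is precisely the information your reduction discards.
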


The requirement that $D \subset S (-a, a)$ is essential because \eqref{AN} is
fulfilled vacuously when $D = S (c, +\infty)$ with $c \in [-\infty, 0)$.

This theorem is an improvment of that obtained by Goldstein, Haussmann and Rogge
\cite{GHR1}, who used the following additional assumptions:\\[-5mm]
\begin{eqnarray*} 
&& \ \ \ \ \ \ \ \ \ \ \ \ \ \ \ \ \ \ \ D \subset S (-3, 3) \, ; \ \ \ \ \partial S
(-1, 1) \setminus D \neq \emptyset \, ; \\ && \mathrm{\eqref{AN} \ is \ fulfilled \
for \ all \ harmonic \ in} \ D \ \mathrm{functions \ integrable \ over} \ D .
\end{eqnarray*}
Without these assumptions, Armitage and Nelson's proof of Theorem~4.5 is completely
different from that in \cite{GHR1}, being based on the refined technique applied
earlier in \cite{AG4}. 

Let us outline their proof for $m \geq 2$. It involves an investigation of the Green
potential on the half-space $S (b, +\infty)$, where $b = -a - 1$. The Green kernel
of $S (b, +\infty)$ is\\[-3mm]
\begin{equation}
G (t,x; \tau, \xi) = |(t,x) - (\tau, \xi)|^{1-m} - |(t,x) - (\tau, \xi)^*|^{1-m} , \
\ \ (t,x) , (\tau, \xi) \in S (b, +\infty) \, , \label{2.1}
\end{equation}
where $|(t,x) - (\tau, \xi)|^2 = |t - \tau|^2 + |x - \xi|^2$ and $(\tau, \xi)^*$
denotes the mirror-image of $(\tau, \xi)$ with respect to the hyperplane $\{b\}
\times \RR^m$; that is, $(\tau, \xi)^* = (2 b - \tau, \xi_1, \dots, \xi_m)$. The
Green potential is defined by\\[-3mm]
\[ U (t, x) = \int_D G (t,x; \tau, \xi) \, \D \tau \D \xi \, , \ \ (t, x) \in S (b,
+\infty) . 
\]
The properties of $U$ used in the proof are as follows.

\begin{lemma}[Doob \cite{Do}, 1.I.7]
\rm{(i)} $U \in C^2 (D) \cap C^1 ( S (b, +\infty) );$ \rm{(ii)} $|\nabla_x U|$ is
bounded on $D;$ \rm{(iii)} $U_{tt} + \nabla_x^2 U = (m^2 - 1) \, \omega_{m+1}$ in
$D$.
\end{lemma}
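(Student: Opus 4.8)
The plan is to deduce all three statements from the fact that $G$ in \eqref{2.1} is, up to a constant, the Green function of the half-space $S(b,+\infty)$, so that the Green potential $U$ is essentially a Newtonian potential corrected by a harmonic image term. First I would split the kernel as $G = |(t,x)-(\tau,\xi)|^{1-m} - |(t,x)-(\tau,\xi)^*|^{1-m}$ and correspondingly write $U = N - V$, where $N$ is the volume potential of the density $\mathbf 1_D$ and $V$ collects the reflected kernels. The observation I would record at the outset, and use throughout, is that for $(\tau,\xi)\in D\subset S(-a,a)$ the image $(\tau,\xi)^*=(2b-\tau,\xi)$ lies in $S(-\infty,b)$ (recall $b=-a-1$); hence as $(t,x)$ ranges over the entire half-space the integrand of $V$ never becomes singular, so $V$ is harmonic, indeed real-analytic, on $S(b,+\infty)$. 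This single fact makes $V$ inert for both the Laplacian in (iii) and the regularity in (i).

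For (iii) I would invoke the distributional Laplacian of the fundamental solution in $\RR^{m+1}$: since $|z|^{1-m}$ equals, up to the positive factor $(m-1)\,|S^m| = (m^2-1)\,\omega_{m+1}$, the fundamental solution of $-\nabla^2$, one has $\nabla^2_{(t,x)}|(t,x)-(\tau,\xi)|^{1-m} = -(m^2-1)\,\omega_{m+1}\,\delta_{(\tau,\xi)}$. Applying $\nabla^2 = \partial_{tt}+\nabla_x^2$ to $N$ and integrating the resulting point mass over $D$ gives a constant of modulus $(m^2-1)\,\omega_{m+1}$ on $D$, its sign being fixed by the normalisation adopted for $G$; since $\nabla^2 V = 0$ on $S(b,+\infty)$, this yields (iii). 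The interchange of $\nabla^2$ with the integral is justified exactly as for the Newtonian potential of a bounded density of finite mass.

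The regularity claim (i) follows from the same decomposition. On the open set $D$ the density $\mathbf 1_D$ is locally constant, so $N$ solves a Poisson equation with smooth right-hand side there and is therefore $C^\infty(D)$; as $V$ is smooth on all of $S(b,+\infty)$, we get $U\in C^2(D)$. Across $\partial D$ the density merely jumps, but the classical theory of volume potentials shows that $N$ is $C^1$ with first derivatives obtained by differentiating the kernel under the integral; combined with the smoothness of $V$, this gives $U\in C^1(S(b,+\infty))$.

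The part I expect to be the main obstacle is (ii), the \emph{uniform} boundedness of $|\nabla_x U|$ over the unbounded, slab-shaped set $D$. The difficulty is that the $x$-gradient of the single kernel $|(t,x)-(\tau,\xi)|^{1-m}$ decays only like $|\xi|^{-m}$ as $|\xi|\to\infty$, which is not integrable over the $m$-dimensional cross-section of $D$, so the naive estimate diverges. The resolution is precisely the image term: because $(\tau,\xi)$ and $(\tau,\xi)^*$ share the same $x$-coordinate and differ only by the bounded amount $2(\tau-b)$ in the $t$-variable, a mean-value estimate applied to the difference of the two kernels produces the faster decay $|\nabla_x G| = O(|\xi|^{-(m+2)})$ at infinity, which is integrable over $D$; near the diagonal the singularity is only $O(|(t,x)-(\tau,\xi)|^{-m})$, integrable in $\RR^{m+1}$. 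Using translation invariance in $x$ together with the bounded $t$-extent of $D$ to render these two estimates uniform in $(t,x)$, I would conclude that $\int_D |\nabla_x G|\,\D\tau\,\D\xi$ is bounded on $D$, giving (ii). I expect the careful far-field bookkeeping in this last step, rather than the formal computations behind (i) and (iii), to be where the real work lies.
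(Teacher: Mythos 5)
The paper gives no proof of this lemma at all: it is imported verbatim from Doob \cite{Do}, 1.I.7, so your attempt can only be measured against the standard potential-theoretic argument, which is indeed the kind of argument you sketch. Your part (ii) is essentially sound: you correctly insist on keeping the two kernels paired, and the claimed decay $|\nabla_x G| = O(|\xi|^{-(m+2)})$ does hold, as one sees from $\nabla_x G = (1-m)(x-\xi)\bigl[\,|z-w|^{-(m+1)} - |z-w^*|^{-(m+1)}\bigr]$ together with $|z-w^*|^2 - |z-w|^2 = 4(t-b)(\tau-b)$, which is bounded when both points lie in the slab; even the cruder bound $O(|\xi|^{-(m+1)})$ that a single mean-value application actually delivers is already integrable over $S(-a,a)$, so (ii) goes through.

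The genuine gap is in (i) and (iii): the global decomposition $U = N - V$ is illegitimate for precisely the sets $D$ to which the lemma is applied. In Theorem 4.5 the set $D$ contains the hyperplane $\{0\}\times\RR^m$ and the intended conclusion is $D = S(-1,1)$, so $D$ has, in general, infinite Lebesgue measure; the single kernel decays only like $|\xi|^{1-m}$, and $\int_{|\xi|>1}|\xi|^{1-m}\,\D\xi$ diverges over the slab's $m$-dimensional cross-section, so $N$ and $V$ are both identically $+\infty$. The assertions ``$V$ is real-analytic on $S(b,+\infty)$'' and ``$N$ solves a Poisson equation'' are then vacuous --- only the paired difference, with $G = O\bigl((t-b)(\tau-b)\,|\xi|^{-(m+1)}\bigr)$, is integrable, which is the whole point of using the Green kernel of the half-space rather than the Newtonian one. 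Ironically, you spotted exactly this phenomenon one derivative up when you rejected the naive estimate in (ii), but overlooked it at the level of the potentials themselves. The repair is routine but must be made: fix a ball $B$ with $\bar B \subset S(b,+\infty)$, split $D$ into $D\cap 2B$ and $D\setminus 2B$, apply classical Newtonian (plus harmless image) potential theory to the bounded near part --- where your decomposition is valid --- and show that the far part, kernels kept paired, converges locally uniformly together with its $z$-derivatives and is harmonic on $B$. Finally, on (iii) your hedge about the sign is not a verification: with the kernel exactly as in \eqref{2.1} and the positive density of $D$, the computation gives $U_{tt} + \nabla_x^2 U = -(m^2-1)\,\omega_{m+1}$ in $D$ (a Green potential of a positive density is superharmonic), so the positive sign in the lemma reflects a different normalization (or a slip) that a complete proof would have to pin down rather than leave ``fixed by the normalisation''.
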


Armitage and Nelson begin their proof by demonstrating that $D$ is connected. For
this purpose serves the function $G (t,x; a, 0) (\chi_{D_c} + k \chi_{D \setminus
D_c})$ with $k=1$ and $k=2$; here $\chi_E$ is the characteristic function of a set
$E$ and $D_c$ is a connected component of $D$. It is clear that these two functions
cannot satisfy \eqref{AN} simultaneously. Extensively using Lemma~4.1, it is shown
on the next step that there exists a real number $g$ such that $D \subset S (g - 2,
g)$ and $|S (g-2, g) \setminus D| = 0$. Finally, the expression for\\[-3mm]
\[ \int_{\RR^m} G (t,x; c, \xi) \, \D \xi \, , \ \ \mbox{where} \ c \in (b, +\infty),
\]
(it was obtained in \cite{AG2}) is applied for demonstrating that $g = 1$ and that
the inclusion is in fact an equality. The same proof is valid for $m=1$ provided
$|(t,x) - (\tau, \xi)|^{1-m}$ is changed to $-\log |(t,x) - (\tau, \xi)|$.

Now we turn to a characterization of strips that is similar to Theorem~3.1 for
balls, but involves the Green kernel \eqref{2.1} instead of the fundamental
solution of Laplace's equation.

\begin{theorem}
Let $D$ be an open set such that $\{0\} \times \RR^m \subset D \subset S (-a, a)$
for some $a \in (0, \infty)$. If for $G$ defined by \eqref{2.1} the identity
\begin{equation*}
\int_D G (t,x; \tau, \xi) \, \D \tau \D \xi = 2 \int_{\RR^m} G (t,x; 0, \xi) \, \D
\xi \, ,
\end{equation*}
is valid for every $(t,x) \in [(-a-1, \infty) \times \RR^m] \setminus D$, then $D = S (-1,
1)$.
\end{theorem}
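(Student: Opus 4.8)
The plan is to mimic the potential-theoretic proof of the Aharonov--Schiffer--Zalcman Theorem~3.1, replacing the Newtonian potential by the Green potential of the half-space $S(b,+\infty)$, $b=-a-1$. Write $U_E(t,x)=\int_E G(t,x;\tau,\xi)\,\D\tau\,\D\xi$ for measurable $E\subset S(b,+\infty)$, so that the left side of the hypothesis is $U_D(t,x)$. The first step is to recognise the right side as $U_{S(-1,1)}$ on the exterior of the strip: for fixed $(t,x)\notin\overline{S(-1,1)}$ the kernel $h(\tau,\xi)=G(t,x;\tau,\xi)$ is positive, harmonic in $(\tau,\xi)$ throughout $S(-1,1)$ (its pole lies outside) and integrable there, since the cancellation against the image term produces the dipole decay $G=O(|\xi|^{-(m+1)})$ as $|\xi|\to\infty$. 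Hence Theorem~2.4, applied with the interval $(-1,1)$ to this $h$, gives $U_{S(-1,1)}(t,x)=2\int_{\RR^m}G(t,x;0,\xi)\,\D\xi$. Combining this with the hypothesis, the function $w=U_D-U_{S(-1,1)}$ vanishes on the common exterior $\mathcal E=[S(b,+\infty)\setminus D]\cap[S(b,+\infty)\setminus\overline{S(-1,1)}]$.

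By Lemma~4.1 both potentials are continuous on $S(b,+\infty)$, vanish on $\{b\}\times\RR^m$, and satisfy $\nabla^2U_E=(m^2-1)\,\omega_{m+1}\,\chi_E$, while the same dipole bound shows $w\to0$ as $|x|\to\infty$ and as $t\to+\infty$. Thus $w$ is continuous, vanishes on $\mathcal E$ and on $\{t=b\}$, tends to zero at infinity, and obeys
\[
\nabla^2 w=(m^2-1)\,\omega_{m+1}\,(\chi_D-\chi_{S(-1,1)})\qquad\text{in }S(b,+\infty),
\]
so $w$ is harmonic on $D\cap S(-1,1)$, subharmonic on $D\setminus\overline{S(-1,1)}$, and superharmonic on $S(-1,1)\setminus\overline D$. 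The decisive step is to prove $w\equiv0$; granting this, the density $\chi_D-\chi_{S(-1,1)}$ is null, so $|D\,\triangle\,S(-1,1)|=0$. The inclusion $D\subset\overline{S(-1,1)}$ then follows because $D\setminus\overline{S(-1,1)}$ is open and null, hence empty; and no point $p\in S(-1,1)\setminus D$ can occur, for at such a $p$ the pole of $G(p;\cdot\,,\cdot)$ would sit \emph{inside} $S(-1,1)$, so Theorem~2.4 fails there and the hypothesis $U_D(p)=2\int_{\RR^m}G(p;0,\xi)\,\D\xi$ cannot hold. As $D$ is open and trapped between $S(-1,1)$ and $\overline{S(-1,1)}$, this forces $D=S(-1,1)$.

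Proving $w\equiv0$ is where the real work lies, and it is the exact analogue of the coincidence (Sakai-type) uniqueness underlying Theorem~3.1, now on an unbounded half-space. The scheme is first to establish that $w$ has a fixed sign by a maximum-principle comparison on the two one-signed regions $D\setminus\overline{S(-1,1)}$ and $S(-1,1)\setminus\overline D$, using that $w=0$ on $\mathcal E$ together with the sub-/super-harmonicity recorded above, and then to feed this into the Riesz representation of $w$ as the Green potential of $(\chi_D-\chi_{S(-1,1)})$: a potential that vanishes on $\mathcal E$ and whose density is of one sign cannot survive unless that density is null. The hard part will be the control at infinity and along $\{t=b\}$: unlike the bounded ball case, the regions on which $w$ changes type are slabs of infinite horizontal extent, so the boundary terms in the requisite Green identity must be shown to vanish, which is precisely what the estimate of Lemma~4.1(ii) and the dipole bound are there to secure.

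An alternative route would reduce the theorem to Theorem~4.5. For each exterior $(t,x)$, the hypothesis is nothing but identity~\eqref{AN} tested on the positive, harmonic, integrable function $G(t,x;\cdot\,,\cdot)$. If one could show that these half-space kernels, as $(t,x)$ ranges over the exterior, generate through integration every positive harmonic function integrable on $D$, then \eqref{AN} would hold in full and Theorem~4.5 would at once give $D=S(-1,1)$. The obstacle here is the required completeness of the kernel traces on $\partial D$, and for that reason I would pursue the direct potential-theoretic argument above as the primary line.
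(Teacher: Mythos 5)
Your proposal has a genuine gap at exactly the point you flag as ``where the real work lies'': the claim $w \equiv 0$ is never proved, only programmed, and the program as stated does not work. You propose to (a) give $w = U_D - U_{S(-1,1)}$ a fixed sign by a maximum-principle comparison on $D \setminus \overline{S(-1,1)}$ and $S(-1,1) \setminus \overline{D}$, and then (b) invoke the principle that ``a potential that vanishes on $\mathcal E$ and whose density is of one sign cannot survive unless that density is null.'' But the density here is $\chi_D - \chi_{S(-1,1)}$, which is genuinely signed (positive on one piece of the symmetric difference, negative on the other) unless one already knows a piece is null --- so the one-signed-density principle in (b) simply does not apply, and a fixed sign of $w$ is not the same thing as a fixed sign of its Riesz density. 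Step (a) is also not a routine maximum-principle argument: on, say, $D \setminus \overline{S(-1,1)}$ the function $w$ is subharmonic and vanishes only on the part of the boundary lying in $\mathcal E$, while on $\partial D \cap \overline{S(-1,1)}$ and $\partial S(-1,1) \cap D$ you have no boundary information at all; these regions are unbounded slabs, and controlling $w$ there is essentially the full Sakai-type uniqueness problem on a half-space, which you defer rather than solve. There is a second, smaller flaw: your exclusion of points $p \in S(-1,1) \setminus D$ by saying ``Theorem~2.4 fails there'' is a non sequitur --- the inapplicability of a sufficient condition does not refute the identity. What you actually need is the strict inequality $2\int_{\RR^m} G(p;0,\xi)\,\D\xi > U_{S(-1,1)}(p)$ for interior $p$; this is true and even computable (by translation invariance in $\xi$ the difference depends on $t$ alone and equals a positive multiple of $(1-|t|)^2$), but it must be proved, not inferred from the failure of a derivation.

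For comparison, the paper does not argue this way at all: it points to Watson \cite{NAW}, pp.~247--248, whose proof ``repeats to a large extent'' Armitage and Nelson's proof of Theorem~4.5, as outlined in the paper --- first one shows $D$ is connected, then, using the properties of the Green potential recorded in Lemma~4.1, one produces a real number $g$ with $D \subset S(g-2,g)$ and $|S(g-2,g) \setminus D| = 0$, and finally the explicit formula from \cite{AG2} for the hyperplane integral $\int_{\RR^m} G(t,x;c,\xi)\,\D\xi$ pins down $g = 1$ and upgrades the inclusion to equality. Your opening moves are sound --- the identification of the right-hand side as $U_{S(-1,1)}$ via Theorem~2.4 (the dipole decay $G = O(|\xi|^{-(m+1)})$ is correct and gives the needed integrability), the equation $\nabla^2 w = c\,(\chi_D - \chi_{S(-1,1)})$, and the reduction of the theorem to $|D \,\triangle\, S(-1,1)| = 0$ --- but the passage from ``$w = 0$ on $\mathcal E$'' to ``the signed density is null'' is the theorem, and it remains unproven in your write-up. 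Your fallback reduction to Theorem~4.5 is, as you note yourself, blocked by the completeness question, so neither route in the proposal reaches the conclusion.
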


The proof of this assertion in \cite{NAW}, pp.~247--248, repeats to a large extent
Armitage and Nelson's proof of Theorem~4.5; see above.

It occurs that a bi-infinite cylinder is also characterized by harmonic quadrature;
namely, the following inverse of Theorem~2.5 was obtained by Goldstein, Haussmann,
Rogge \cite{GHR}.

\begin{theorem}
Let $D \subset \RR^{m+l}_{x,y}$ be an open subset of some (arbitrarily large)
cylinder such that $\{ (0,\dots,0) \} \times \RR^l \subset D$ and $\mathrm{int} \bar
D = D$. If for every positive $h$ is harmonic and integrable on $D$ we have
\begin{equation*}
\int_{D} h (x, y) \, \D x \D y = |B_r| \int_{\RR^l} h (0, y) \, \D y \, ,
\end{equation*}
then $D = B_r \times \RR^l$.
\end{theorem}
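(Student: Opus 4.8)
The plan is to reduce the cylindrical quadrature problem to a characterization of balls, in the spirit of Armitage and Nelson's treatment of the strip (Theorem~4.5). Write $C_R = B_R \times \RR^l$ for an enclosing cylinder, with $R$ chosen so large that $D \subset C_R$ and that the $x$-extent of $D$ stays in a ball compactly contained in $B_R$; by hypothesis $\{0\}\times\RR^l \subset D$ and $\mathrm{int}\,\overline D = D$. The decisive structural difference from the compact setting is that $D$, the target $B_r\times\RR^l$, and the central fibre $\{0\}\times\RR^l$ all have infinite volume. In particular constants are not admissible test functions, and the Newtonian kernel of $\RR^{m+l}$ need not be integrable over $D$; this is what dictates the choice of test functions.

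First I would prove that $D$ is connected by the device used for Theorem~4.5: a fixed positive kernel multiplied by $\chi_{D_c}+k\,\chi_{D\setminus D_c}$, $k=1,2$, cannot satisfy the identity for both values of $k$ if $D_c$ is a proper component. Next, in place of the half-space kernel \eqref{2.1} I would use the Green function $G$ of the enclosing cylinder $C_R$. Since the cross-section $B_R$ is bounded, $G\big((x,y);(\xi,\eta)\big)$ decays exponentially in the axial separation $|y-\eta|$, so that for each fixed pole $\zeta=(\xi,\eta)\in C_R\setminus\overline D$ the function $h=G(\cdot,\zeta)$ is positive, harmonic in $D$ and integrable over $D$ --- hence admissible. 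Feeding it into the identity and using the symmetry of $G$ gives, for the Green potential $U_D(\zeta)=\int_D G(z,\zeta)\,\D z$,
\[ U_D(\xi,\eta) = |B_r|\int_{\RR^l} G\big((\xi,\eta);(0,\sigma)\big)\,\D\sigma , \qquad (\xi,\eta)\in C_R\setminus\overline D . \]
Integrating $-\nabla^2 G=\delta_\zeta$ over the source along $\{0\}\times\RR^l$ and invoking axial translation invariance shows that the right-hand side is independent of $\eta$ and equals $|B_r|\,g_{B_R}(\xi,0)$, where $g_{B_R}$ is the $m$-dimensional Green function of the cross-sectional ball with pole at the origin.

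The exterior trace of $U_D$ is thus axially constant and radial in $\xi$. From this I would deduce that $D$ itself is axially invariant, $D=\Omega\times\RR^l$ for some open $\Omega\subset B_R$: this is where a moving-planes argument in the axial variables (exploiting the translation invariance of the overdetermined data $U_D=|B_r|g_{B_R}(\xi,0)$), or equivalently a uniqueness argument forcing $U_D$ to be $\eta$-independent throughout $C_R$, enters. Once product structure is known, $U_D$ collapses to the cross-sectional potential $v_\Omega(\xi)=\int_\Omega g_{B_R}(\xi,\xi')\,\D\xi'$, and the coincidence becomes the $m$-dimensional quadrature identity $v_\Omega(\xi)=|B_r|\,g_{B_R}(\xi,0)$ for $\xi\in B_R\setminus\overline\Omega$. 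Integrating the Poisson equation $\nabla^2\big[v_\Omega-|B_r|g_{B_R}(\cdot,0)\big]=-(\chi_\Omega-|B_r|\delta_0)$ over $B_R$, and using that this difference vanishes together with its normal derivative near $\partial B_R$ (the exterior coincidence region reaches the wall because $\overline\Omega$ is compactly contained in $B_R$), yields the volume normalisation $|\Omega|=|B_r|$. The identity then characterises $\Omega$ as the centred ball $B_r$ by the quadrature-identity arguments for balls (Theorems~3.1 and~4.3, or equivalently the improved Kuran theorem obtained as the $\mu\to0$ limit of Theorem~3.5), so $\Omega=B_r$ and $D=B_r\times\RR^l$.

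The hard part is the passage from the $\eta$-independence of the exterior data to genuine axial invariance of $D$: a priori $D$ is only an open subset of $C_R$ with no product structure, and $U_D$ depends on both $\xi$ and $\eta$ inside the cylinder. Making this rigorous is the analogue, in the unbounded cylindrical geometry, of the symmetry step of Serrin and Weinberger (Theorem~3.3) underlying Bennett's and Rodr\'igues' proofs (Theorems~3.2 and~4.4); here it must be carried out on the infinite cylinder rather than a bounded domain, which is precisely why one localises by reducing to the bounded cross-section. The remaining, more technical, point peculiar to infinite volume is the unavailability of constants as test functions, handled above by extracting the normalisation $|\Omega|=|B_r|$ from the divergence theorem rather than from testing with $h\equiv1$.
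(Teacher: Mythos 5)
Your opening moves are fine: the connectivity device borrowed from Armitage--Nelson works, the Green functions $G(\cdot,\zeta)$ of an enclosing cylinder $C_R$ with pole $\zeta \in C_R \setminus \overline D$ are admissible test functions (positive, harmonic on $D$, integrable thanks to the exponential axial decay), and the identity $\int_{\RR^l} G\big((\xi,\eta);(0,\sigma)\big)\,\D\sigma = g_{B_R}(\xi,0)$ is true --- though note it is essentially the relationship between the Green functions of a cylinder and of its cross-sectional ball which, as the survey remarks, takes Goldstein, Haussmann and Rogge four pages to derive, so it deserves more than an appeal to translation invariance. The genuine gap is the step you yourself flag as ``the hard part'': deducing the product structure $D = \Omega \times \RR^l$ from the $\eta$-independence of the exterior values of $U_D$. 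The Serrin--Weinberger moving-planes machinery (Theorem~3.3) does not apply here: it requires overdetermined Dirichlet-plus-Neumann data on the unknown boundary $\partial D$, whereas what you have is an exterior coincidence of potentials, i.e.\ a quadrature-domain condition on $D$. Translation invariance of the exterior data transfers to $D$ itself only via uniqueness of the domain realizing this quadrature identity --- but that uniqueness is essentially the full content of the theorem being proved, so the argument is circular at its central point, and no proof of the symmetry step is offered.

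The detour through product structure is also unnecessary, and removing it is how the actual proof goes. Since $g_{B_R}(\xi,\cdot)$ is harmonic on $B_r$ whenever $\xi \notin \overline{B_r}$, the cross-sectional ball mean value gives $\int_{B_r} g_{B_R}(\xi,\xi')\,\D\xi' = |B_r|\, g_{B_R}(\xi,0)$, so the right-hand side of your display is exactly $U_{B_r \times \RR^l}(\xi,\eta)$; your identity therefore reads $U_D = U_{B_r \times \RR^l}$ on $C_R$ off the closures of the two sets. The difference $f = U_D - U_{B_r \times \RR^l}$ is bounded and continuous, satisfies $\nabla^2 f = c\,(\chi_{B_r \times \RR^l} - \chi_D)$ with $c > 0$, hence is superharmonic on $D \setminus \overline{B_r \times \RR^l}$ and subharmonic on $(B_r \times \RR^l) \setminus \overline D$, and vanishes on the boundaries of these sets. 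To conclude $f \equiv 0$ one needs a maximum principle valid on these \emph{unbounded} open sets --- precisely the role of Proposition~4.1 in the survey, whose hypothesis \eqref{fin} holds because both sets lie in a cylinder of bounded cross-section, so $|E \cap B_\rho| = O(\rho^l) = o(\rho^{m+l})$. From $f \equiv 0$ one gets $|D \,\triangle\, (B_r \times \RR^l)| = 0$ and then $D = B_r \times \RR^l$ via $\mathrm{int}\,\overline D = D$ (this is where that hypothesis, which you never use, is needed). This comparison-plus-Phragm\'en--Lindel\"of route, sketched in the paper following \cite{GHR}, replaces your unproven symmetry step; your final reduction to Kuran-type theorems for the cross-section, and the volume normalisation via the divergence theorem, then become superfluous as well.
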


The proof of this theorem in \cite{GHR} is based on a relationship between Green's
functions of $B_r \times \RR^l$ and of $B_r$ (its tedious derivation occupies four
pages) and on the following result of independent interest.

\begin{proposition}
Let $D \subset \RR^m$, $m \geq 2$, be an unbounded open set such that 
\begin{equation}
|D \cap B_r| = o (r^m) \ \ \ as \ r \to \infty . \label{fin}
\end{equation}
If $h \in C (\bar D) \cap L^\infty (D)$ is harmonic in $D$ and vanishes on
$\partial D$, then $h$ vanishes on~$\bar D$.
\end{proposition}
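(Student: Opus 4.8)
The plan is to recast the proposition as a Liouville-type statement for a subharmonic function obtained by extending $h$ by zero across $\partial D$, and then to exploit the thinness condition \eqref{fin} through the sub-mean value inequality over large balls. To this end I would set $M = \|h\|_{L^\infty (D)}$ and define $v$ on all of $\RR^m$ by $v = |h|$ on $\overline D$ and $v = 0$ on $\RR^m \setminus D$. Since $h \in C (\overline D)$ and $h = 0$ on $\partial D$, the two prescriptions agree on $\partial D$, so $v$ is continuous on $\RR^m$ with $0 \le v \le M$ throughout.

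The crucial step is to show that $v$ is subharmonic on the whole of $\RR^m$. At points of $D$ this is clear, since $|h| = \max (h, -h)$ is the maximum of the harmonic functions $\pm h$; at points of $\RR^m \setminus \overline D$ it is clear because $v \equiv 0$ there. The only delicate points are those of $\partial D$, which may be quite irregular for an arbitrary open set, and this is the step I expect to be the main obstacle. The resolution is that no boundary regularity is actually needed: at such a point $x$ one has $v (x) = 0$, whereas $v \ge 0$ everywhere, so $M^\circ (v, \partial B_\rho (x)) \ge 0 = v (x)$ and the local sub-mean value inequality holds trivially. Together with continuity this yields subharmonicity on $\RR^m$, and consequently the global inequality $v (x) \le M^\bullet (v, B_r (x))$ for every $x \in \RR^m$ and every $r > 0$.

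It then remains to let the radius grow. Fixing $x_0 \in \RR^m$ and using $v \le M$ together with $v = 0$ off $D$, I would estimate
\[ v (x_0) \le \frac{1}{\omega_m r^m} \int_{B_r (x_0)} v \, \D y \le \frac{M}{\omega_m r^m} \, |D \cap B_r (x_0)| \, . \]
Because $B_r (x_0) \subset B_{r + |x_0|}$, hypothesis \eqref{fin} gives $|D \cap B_r (x_0)| \le |D \cap B_{r + |x_0|}| = o (r^m)$ as $r \to \infty$, so the right-hand side tends to zero. Hence $v (x_0) = 0$ for every $x_0$, which is to say $h \equiv 0$ on $\overline D$. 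The argument is uniform in $m \ge 2$ and uses only the sub-mean value inequality, so the irregularity of $\partial D$ and the possible disconnectedness of $D$ never enter.
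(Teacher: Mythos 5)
Your proof is correct and follows essentially the same route as the paper: the paper's proof sketch says precisely that one estimates the mean value of the subharmonic function $|h|$ using property \eqref{fin}, and your zero-extension of $|h|$ to $\RR^m$ (with the sub-mean value inequality holding trivially at points of $\partial D$ because $v \geq 0 = v(x)$ there, a special case of the standard gluing lemma for subharmonic functions) together with the estimate over large balls $B_r (x_0) \subset B_{r+|x_0|}$ is exactly the intended argument made rigorous.
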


To prove this assertion the authors estimate the mean value of the subharmonic
function $|h|$ using property \eqref{fin}.

\vspace{-10mm}

\renewcommand{\refname}{
\begin{center}{\Large\bf References}
\end{center}}
\makeatletter
\renewcommand{\@biblabel}[1]{#1.\hfill}
\makeatother

\end{document}